\documentclass[11pt]{article}
\usepackage{amsmath,amssymb,amsthm}
\usepackage{ed}
\usepackage{enumitem} 
\usepackage{geometry}
\usepackage{graphicx}
\usepackage{multirow}
\usepackage{array}
\usepackage{wrapfig}
\newcommand{\R}{\mathbb{R}}
\newcommand{\calV}{\mathcal{V}}

\newcommand{\bfi}{\bfseries\itshape}

\newtheorem{proposition}{Proposition}
\newtheorem{remark}{Remark}
\newtheorem{theorem}{Theorem}
\newtheorem{lemma}{Lemma}
\newtheorem{corollary}{Corollary}

\begin{document}

\title{Data-Specific Hyper-Parameter Design: A Paradigm Shift in Reservoir Computing}
\author{G Manjunath$^{1}$, Juan-Pablo Ortega$^{2}$, and Alma van der Merwe$^{1}$}

\makeatletter
\addtocounter{footnote}{1} \footnotetext{%
University of Pretoria. Department of Mathematics and Applied Mathematics. Pretoria 0002, South Africa.   {\texttt{manjunath.gandhi@up.ac.za \& }\texttt{alma.vandermerwe@up.ac.za} }}
\addtocounter{footnote}{1} \footnotetext{%
Nanyang Technological University. Division of Mathematical Sciences. School of Physical and Mathematical Sciences. Singapore.   {\texttt{juan-pablo.ortega@ntu.edu.sg} } }
\makeatother

\date{\today}
\maketitle
\begin{abstract}
Reservoir computing typically relies on large, randomly generated reservoirs, enabling simple, often linear readouts. Over the past two decades, most constructions have exploited the freedom to select the reservoir, constrained primarily by stability conditions based on state contraction or memory capacity. However, these designs are largely independent of the input data and learning objective, resulting in a trial-and-error methodology driven by randomness. In high dimensions, the reservoir acts as a random embedding of the input history, implicitly relying on Johnson--Lindenstrauss--type concentration phenomena to preserve information and maintain numerical stability.

In contrast, we develop reservoir design principles from a geometric perspective for inputs generated by deterministic dynamical systems. Rather than relying on random embeddings, we require reservoir state increments to align within a cone around an input-determined vector subspace, and prove that such a cone concentration reduces ridge-regression training error. When the cone angle is small, the variance of reservoir states concentrates in the input-determined subspace, improving conditioning of the empirical second-moment matrix and strengthening alignment between dominant covariance directions and the state-target cross-covariance.

For echo state networks, we provide a constructive approach to reservoir design. The reservoir matrix is chosen so that associated Krylov-chain directions remain nearly closed within an input-determined subspace while permitting controlled mixing in its orthogonal complement. We also provide a spectral diagnostic for ridge regression training that identifies when reservoir geometry concentrates predictive information into a few dominant covariance modes and when ``spectral pollution'' inhibits forecasting. Numerical experiments demonstrate consistent performance gains over arbitrary reservoir constructions.
\end{abstract}

\section{Introduction} \label{sec:introduction}
We consider the general discrete-time state-space model
\begin{equation}
\label{eq:state_space}
x_{n+1} = g(u_n, x_n), \qquad y_n = r(x_n),
\end{equation}
where $u_n$ denotes the {\bfi input}, $x_n$ the internal {\bfi state} belonging to some metric or topological spaces $U$ and $X$ respectively, and $g$ is a continuous map. The elements $ y_n$ are called the {\bfi outputs} or {\bfi targets}.
Reservoir computing (RC) \cite{maass1, Jaeger04, jaeger2001} constructs the state equation $g$ randomly and then trains a readout map $r$ to approximate a target sequence $y_n$ using only the state $x_n$.  A central virtue of RC lies in its potential for
computational efficiency: training is typically reduced to solving a
linear regression problem, often ridge-regularized, while the internal
dynamics are left untrained. Additionally, when used for the forecasting of deterministic dynamical systems, RC has advantages over other data-driven methods \cite{manjunath2025universal} and generalizes the Takens delay embedding \cite{hart:ESNs, RC18, RC21}.

A particular case of RC is an echo state networks (ESN) \cite{Matthews:thesis, Matthews1993}, where the update map takes the form 
\begin{equation}
\label{eq:esn_update}
x_{n+1} = \tanh(A u_n + B x_n), \qquad y_n = r(x_n),
\end{equation}
with matrices $A$ and $B$ (usually called {\bfi  input} and {\bfi connectivity matrices}, respectively) of appropriate dimension and where the nonlinear function $\tanh $ is applied componentwise; the readout $r$ is customarily realized as a linear transformation $W x_n$ where $W$ is a matrix. From a theoretical viewpoint, universal approximation results \cite{RC7, RC8, RC20, RC12} highlight the relevance of their study for practical applications. Practical architecture and hyperparameter selection methods for ESNs are not yet established, and in particular, designing the reservoir matrix $B$ remains a major practical challenge.
The difficulty stems from two sources: the high dimensionality of the
reservoir space and the time-dependent forcing induced by the input.
The resulting dynamics are typically intractable, and reservoir matrices
are therefore chosen randomly, subject mainly to spectral radius
constraints ensuring stability, or in other words, guaranteeing the {\bfi echo state property}, that there is exactly one bi-infinite sequence $\{x_n\}$ for each $\{u_n\}$ that satisfies the state space equation. 

When the input sequence $\{u_n\}\subset U$ is deterministic in the sense
that there exists $J>0$ and a well-defined map
\[
F: (u_{n-J-1},\ldots,u_n) \mapsto u_{n+1}
\]
valid for all $n$, the situation simplifies conceptually.
One may then regard the input history as evolving on a subspace of $U^J$,
so that the problem reduces to embedding this $J$-dimensional structure
into the reservoir state space \cite{hart:ESNs, allen:tikhonov, RC18, RC21, RC26}.
Recent developments show that under suitable conditions, such embeddings
can even be made bi-Lipschitz, implying that exact reconstruction of the
underlying dynamics is possible in principle since a topological embedding would ensure the readout would satisfy $r(x_n) = y_{n}$, when $y_n = u_n$ \cite{RC36}. 

However, the strength of reservoir computing does not lie in exact
reconstruction via a complex nonlinear inversion and learning, but rather in the
availability of cheap training procedures to determine $r$.
In practice, the readout is typically determined by ridge regression,
and performance depends critically on the geometry of the empirical
second-moment matrix of the states.
Thus, the central question is not merely whether the input can be
embedded, but whether the reservoir state geometry is favorable for
stable and efficient linear training.

In this work, we provide a principled design of the reservoir matrix for
deterministic inputs that aims to shape the reservoir state geometry
toward optimal ridge regression performance. In the following subsection, we provide examples to motivate the need for this shaping.

\subsection{Geometry of the ridge regression}

All along this paper, we assume that the state-space model \eqref{eq:state_space} satisfies the \textit{echo state property} (ESP) \cite{jaeger2001echo, Manjunath:Jaeger, RC31, RC32} so that for any input sequence $\{u_n\}$ there exists a unique state sequence $\{x_n\}$ satisfying \eqref{eq:state_space} for all $n \in \mathbb{Z}$. This uniqueness is often guaranteed by imposing contractivity of $g$ with respect to the $x$-variable. 

\paragraph{Washout and independence of initial conditions.} 
Let $x_n(x_0)$ denote the reservoir state initialized at $x_0$ and driven by $\{u_n\}$. Under the ESP, there exists a unique bi-infinite sequence $\{x_n\}_{n \in \mathbb{Z}}$ satisfying \eqref{eq:state_space}, and by the {\bfi uniform attraction property (UAP)} \cite{manjunath2022embedding}, we have $x_n(x_0) \to x_n$ uniformly in $x_0$. Hence, after a sufficiently long washout, the state is effectively independent of initialization. All empirical moments are computed after such a washout, so that the quantities in the theorems are approximated with negligible error, whereas the theoretical results assume the exact trajectory $\{x_n\}$.

Let $\{x_n\}_{n=1}^T\subset\mathbb{R}^N$ be state vectors corresponding to a given input sequence and let $\{y_n\}_{n=1}^T\subset\mathbb{R}^p$ be targets. The input and target sequences are assumed to be realizations of a {\bfi stationary process}, which, in the presence of the ESP, implies the stationarity of the state sequences \cite{Manjunath:Jaeger, RC15}. This is a typical situation encountered when the inputs are observations of a dynamical system preserving an invariant measure, and the target is a measurable function of the input. 
For simplicity in notation, here and throughout without further mention, we assume the sequences are centered and define the empirical state covariance and state--target cross-covariance by
\[
\Sigma := \frac{1}{T}\sum_{n=1}^T x_n x_n^\top \in \mathbb{R}^{N\times N},
\qquad
C := \frac{1}{T}\sum_{n=1}^T x_n y_n^\top \in \mathbb{R}^{N\times p}.
\]
The ridge regression with regularization parameter $\lambda>0 $ solves the optimization problem
\[
\min_{W\in\mathbb{R}^{N\times p}}
\frac{1}{T}\sum_{n=1}^T \|W^\top x_n - y_n\|_2^2
+ \lambda \|W\|_F^2.
\]
Its solution is unique and satisfies
\[
(\Sigma + \lambda I) W = C.
\]
Let $\Sigma = \sum_{i=1}^N \lambda_i q_i q_i^\top$ with $\lambda_1\ge\cdots\ge\lambda_N\ge0$.
The ridge solution admits the expansion
\[
W = \sum_{i=1}^N \frac{q_i q_i^\top C}{\lambda_i+\lambda}.
\]
Thus, the informativeness of an eigendirection $q_i$ is measured by
$\|q_i^\top C\|_2$, and the ridge regression is stable precisely when directions
with large variance $\lambda_i$ also satisfy that $\|q_i^\top C\|_2$ is large.
If large eigenvalues occur in directions with negligible $\|q_i^\top C\|_2$, we say that the ridge regression suffers from {\bfi spectral pollution}. In the particular case, when $y_n$ is a scalar, then 
\[
w = \sum_{i=1}^N \frac{\langle q_i,c\rangle}{\lambda_i+\lambda}\,q_i,
\]
where $c = \frac1T\sum_{n=1}^T x_n y_n$.  Clearly, for the numerator not to become insignificant while the denominator is not, then $c$ should lie in a space 
spanned by a collection of $q_i$'s, where $q_i$'s are those eigenvectors corresponding to $\lambda_i$'s that are large.  We now present two toy examples that illustrate two different ways in which ridge regression could go wrong and, more explicitly, show that ridge regression succeeds or fails depending on how variance is organized in the state space.

\paragraph{Example 1 (Causal RC state: large-variance nuisance direction and signal attenuation).} Let $\{u_n\}$ be a centered stationary deterministic input sequence and let the prediction target be $y_n=u_n$. Suppose that the two-dimensional reservoir state is given by
\[
x_n=(\sigma\xi_n,\;u_{n-1})^\top,
\]
where $\xi_n$ is the state of a \emph{internal reservoir mode},
normalized so that $\langle \xi_n^2\rangle=1$, with $\langle\cdot\rangle$ denoting empirical averaging. Assume 
$|\varepsilon|\ll1$ and $\sigma^2\gg1$. 
Let
\[
\varepsilon:=\langle \xi_n u_n\rangle,
\qquad
\rho:=\langle u_{n-1}u_n\rangle,
\]
Then
\[
\Sigma=\langle x_n x_n^\top\rangle
=
\begin{pmatrix}
\sigma^2 & \sigma\varepsilon\\
\sigma\varepsilon & 1
\end{pmatrix}
\quad \mbox{and} \quad
c=\langle x_n y_n\rangle
=(\sigma\varepsilon,\;\rho)^\top.
\]
Since $\sigma^2\gg1$, $\Sigma$ is poorly conditioned and this calls for
ridge regression.  The ridge regression with parameter $\lambda>0$ yields (since $|\varepsilon|\ll1$, we retain only terms up to first order in $\varepsilon$  neglect $O(\varepsilon^2)$ contributions) 
\[
\hat w=(\Sigma+\lambda I)^{-1}c
=\Bigl(\frac{\sigma\varepsilon}{\sigma^2+\lambda},\;
\frac{\rho}{1+\lambda}\Bigr)^\top.
\] 
Assuming that $\sigma^2\gg\lambda$, the Taylor series expansion of the first entry of $\hat w $ in powers of $\lambda/ \sigma^2 $ yields
\[
\frac{\sigma\varepsilon}{\sigma^2+\lambda}
=
\frac{\varepsilon}{\sigma}
-
\frac{\varepsilon\lambda}{\sigma^3}
+
O\!\left(\frac{\varepsilon\lambda^2}{\sigma^5}\right).
\]
Hence,
\[
\hat w
=
\Bigl(
\frac{\varepsilon}{\sigma},\;
\frac{\rho}{1+\lambda}
\Bigr)^\top
+
O\!\left(\frac{\varepsilon\lambda}{\sigma^3}\right)e_1.
\]
Consequently,
\[
\hat y_n
=
\hat w^\top x_n
=
\varepsilon\xi_n
-
\frac{\varepsilon\lambda}{\sigma^2}\xi_n
+
\frac{\rho}{1+\lambda}u_{n-1}
+
O\!\left(\frac{\varepsilon\lambda^2}{\sigma^4}\right).
\]
While the nuisance contribution is $O(\varepsilon)$ which is desired,
the informative component that contains the autocorrelation $\rho$ between the input and the target is deterministically attenuated
by the factor $(1+\lambda)^{-1}$.
Geometrically, a high-variance reservoir direction with weak task alignment forces ridge regression to shrink the causal informative direction based on variance magnitude rather than predictive relevance. This disadvantage has serious negative implications when, for instance, the resulting model is used in the iterated multi-step prediction of dynamical systems. Indeed, it has been shown \cite{berry2022learning, RC22} that in that situation, estimation errors are amplified exponentially at a rate ruled by the top Lyapunov exponent of the estimated system. 

\paragraph{Example 2 (Low-variance informative direction and over-regularization of signal).}
Retain the notation of Example~1 with the same target, and suppose that
\[
x_n=(\sigma\xi_n,\;\alpha u_{n-1})^\top,
\]
with $0<\alpha\ll1$ and $\sigma^2\gg1$.
Assume $\langle \xi_n u_n\rangle=0$ so that
$\varepsilon=0$ and $\rho=\langle u_{n-1}u_n\rangle\neq0$. Then
\[
\Sigma
=
\begin{pmatrix}
\sigma^2 & 0\\
0 & \alpha^2
\end{pmatrix},
\qquad
c
=
(0,\;\alpha\rho)^\top.
\]
The ridge regression (necessary due to ill-conditioning of $\Sigma$ due to $\sigma^2\gg1$) yields
\[
\hat w
=
(\Sigma+\lambda I)^{-1}c
=
\Bigl(
0,\;
\frac{\alpha\rho}{\alpha^2+\lambda}
\Bigr)^\top.
\]
Hence
\[
\hat y_n
=
\hat w^\top x_n
=
\frac{\alpha^2}{\alpha^2+\lambda}\,\rho\,u_{n-1}.
\]
If $\lambda\gg\alpha^2$ then
\[
\frac{\alpha^2}{\alpha^2+\lambda}
=
\frac{\alpha^2}{\lambda}
+
O\!\left(\frac{\alpha^4}{\lambda^2}\right)
\ll1,
\]
and the predictive signal is strongly attenuated here as well, despite the situation being the opposite of the previous example, in that the informative direction has small variance.  This suggests that ridge regression can almost eliminate the target variable if the regularization parameter is used to control the large-variance nuisance direction.  

We note that in classical (unstructured) least-squares problems, it is standard to
require the empirical covariance $\Sigma$
to be well-conditioned, that is, to have a moderate condition number $\lambda_{\max}/\lambda_{\min}$.  This requirement arises because small eigenvalues amplify noise in the
solution of the normal equations and lead to numerical instability; see,
for example, \cite{belsley1980regression}.  In contrast, our approach in this paper is to mitigate spectral pollution (the representation problem) while also capturing the dynamics of the input within the structure of the reservoir (the dynamical problem).


\subsection{Our approach}

Attempting to tune individual eigenvectors of $\Sigma$ to align with the target, so that problems of the type illustrated in the previous examples are avoided, can be notoriously difficult. However, we can instead shape the entire reservoir geometry so that the covariance spectrum becomes effectively low-dimensional and the cross-covariance between the target and the states is forced to lie predominantly in the same subspace since $c$ lies in the space spanned by the state vectors $x_n$.  These ideas are spelled out in Theorems~\ref{thm:approx_covleak_scalar} and~\ref{thm:novanish_scalar}, which show that directions selected by large variance cannot have negligible alignment with the target when a certain cone aperture is small which, as we will see in Sections \ref{Construction of the linear reservoir} and \ref{sec: general reservoirs} can be used, in the case of the ESN systems introduced in \eqref{eq:esn_update} to device a data-driven design principle of the connectivity $B$. We define the cone geometry next. We first work with linear systems to explain the shaping of the geometry; that is, we use state equations like \eqref{eq:esn_update}, where $\tanh $ is replaced by the identity map, with a possibly nonlinear feature mapping.


\begin{wrapfigure}{r}{0.4\textwidth}
\centering
\includegraphics[width=\linewidth]{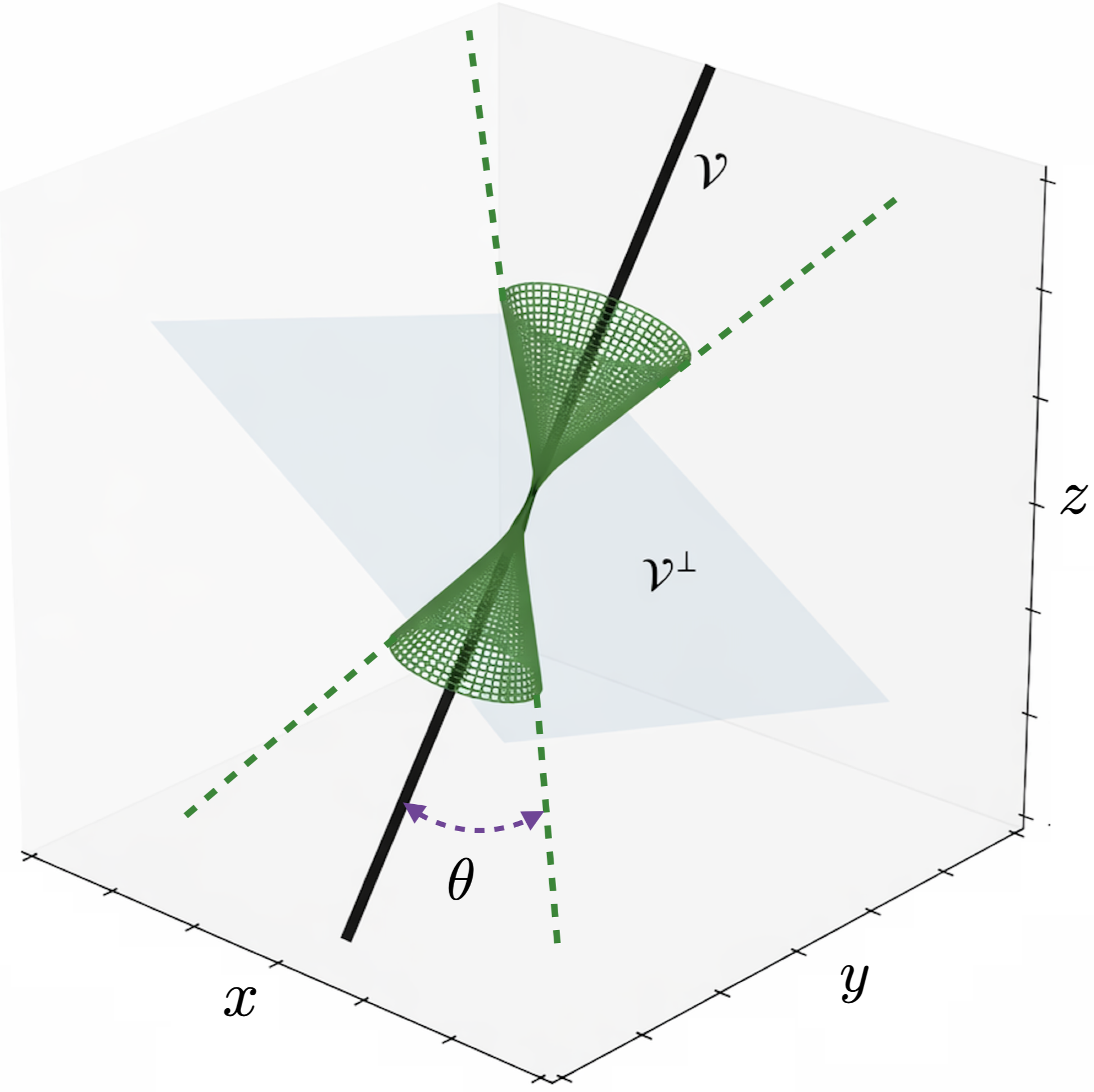}
\caption{Illustrative picture of a cone around a one-dimensional subspace $\mathcal V$ in $\mathbb{R}^3$.}
\label{fig:cone}
\end{wrapfigure}

\paragraph{State-increment subspace cone concentration and covariance leakage.}
Let $\Delta x_n := x_{n+1} - x_{n} \in \mathbb{R}^N$ denote the state increments corresponding to the solution of the system \eqref{eq:state_space} and a fixed input sequence with elements $u _n \in \mathbb{R} $. Assume also that the targets $y _n \in \mathbb{R} $ are scalars. Let $\mathcal{V} \subset \mathbb{R} ^N  $ be a vector subspace of the state space. The result below formalizes a simple geometric fact: if most state increments lie inside the cone $C(\mathcal V,\theta) $ around the vector subspace $\mathcal{V} $ with aperture angle $\theta$ (see Figure~\ref{fig:cone})  defined by 
  \[
  \mathcal C(\mathcal V,\theta)
  :=\Big\{z \in \mathbb{R}^N:\|P_{\mathcal V^\perp}z\|\le\tan\theta\,\|P_{\mathcal V}z\|\Big\},
  \]
($P_{\mathcal V}$ and $P_{\mathcal V^\perp}$ are the {\bfi  orthogonal projectors} onto the vector subspace $\mathcal V$ and its orthogonal complement $\mathcal V^\perp$), then the state-target cross-covariance $c = \frac1T\sum_{n=1}^T x_n y_n$ must also lie close to the same subspace $\mathcal{V} $. We say the increments are {\bfi cone-concentrated} around $\mathcal V$ with aperture angle $\theta$ if
$
\|P_{\mathcal V^\perp}\Delta x_n\|
\le
\tan\theta \, \|P_{\mathcal V}\Delta x_n\|
$ for ``most" $n$. We will refer to the vector $P_{\mathcal V^\perp}c$ or to its norm as the {\bfi covariance leakage}. 
When the angle $\theta$ is small, the state-increment evolution is effectively confined to a low-dimensional vector space, while considering the reservoir dimension to be very large. The next two theorems are later on proved in a vector-valued target framework in Theorems~\ref{thm:approx_covleak} and~\ref{thm:novanish_vector_robust}.

\begin{theorem}[Covariance leakage, scalar case]
\label{thm:approx_covleak_scalar}
Let $\{x_n\}_{n=0}^T\subset\mathbb{R}^N$ and 
$\{y_n\}_{n=1}^T\subset\mathbb{R}$ be centered sequences of states and targets, respectively, and define
\[
c=\frac1T\sum_{n=1}^T x_n y_n,
\qquad
\Delta x_n=x_n-x_{n-1}.
\]
Fix a subspace $\mathcal V\subset\mathbb{R}^N$. Assume:
\begin{description}
\item [(i)]  \emph{Cone condition on increments}: There exists $\mathcal G\subset\{1,\dots,T\}$, $|\mathcal G|\ge (1-\alpha)T$ and $m_\Delta>0 $ such that 
\[
\|P_{\mathcal V^\perp}\Delta x_n\|\le \tan\theta\,\|P_{\mathcal V}\Delta x_n\|,  n\in\mathcal G,
\]
and $ \|\Delta x_n\|\le m_\Delta$, for all $n$.
\item [(ii)] \emph{Uniform transverse boundedness}: Under the standing stationarity assumption (Section~1.1), there exists $M_\perp>0$ \emph{independent of $T$} such that 
\[
\|P_{\mathcal V^\perp}x_n\|\le M_\perp,\qquad n=1,\dots,T.
\]
\end{description}
Then
\[
\|P_{\mathcal V^\perp} c\| \le M_\perp\,\sigma_y, 
\qquad
\sigma_y:=\Big(\frac1T\sum_{n=1}^T y_n^2\Big)^{1/2}.
\]
In particular, when $\sigma_y$ is bounded uniformly in $T$ and $M_\perp$ is small, the leakage $\|P_{\mathcal V^\perp}c\|$ is small independently of $T$.
\end{theorem}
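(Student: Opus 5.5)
The bound follows directly from hypothesis (ii) together with linearity of the projector and the Cauchy--Schwarz inequality; the cone condition (i) is not needed for the inequality itself. Its role is to make hypothesis (ii) plausible with a small $M_\perp$, which is what the ``in particular'' clause relies on.

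First, since $P_{\mathcal V^\perp}$ is linear, I pass it inside the empirical average:
\[
P_{\mathcal V^\perp}c=\frac1T\sum_{n=1}^T (P_{\mathcal V^\perp}x_n)\,y_n .
\]
Next, I apply the triangle inequality and then Cauchy--Schwarz in $\mathbb{R}^T$ to the sequences $\|P_{\mathcal V^\perp}x_n\|$ and $|y_n|$:
\[
\|P_{\mathcal V^\perp}c\|\le \frac1T\sum_{n=1}^T \|P_{\mathcal V^\perp}x_n\|\,|y_n|
\le \Big(\frac1T\sum_{n=1}^T\|P_{\mathcal V^\perp}x_n\|^2\Big)^{1/2}\Big(\frac1T\sum_{n=1}^T y_n^2\Big)^{1/2}.
\]
Finally, I bound the first factor by $M_\perp$ using (ii), which gives $\|P_{\mathcal V^\perp}c\|\le M_\perp\sigma_y$. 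The last assertion of the theorem then follows immediately, because $M_\perp$ is independent of $T$ by assumption.

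There is no genuine obstacle in this argument; the only point needing care is to be transparent about the role of (i). One could hope to derive a bound on $M_\perp$ from (i) by telescoping,
\[
P_{\mathcal V^\perp}x_n=P_{\mathcal V^\perp}x_0+\sum_{k\le n}P_{\mathcal V^\perp}\Delta x_k ,
\]
and splitting the sum into indices in $\mathcal G$, each contributing at most $\tan\theta\, m_\Delta$, and indices outside $\mathcal G$, each contributing at most $m_\Delta$. However, this gives a bound that grows linearly in $n$, so it cannot yield a $T$-independent constant without a stationarity or contraction argument. That is exactly why uniform transverse boundedness is imposed separately as (ii). I would add a remark to this effect: (i) is the design mechanism expected to make $M_\perp$ small, for instance through contraction in the transverse directions for the ESP reservoirs of Section~1.1, while the inequality itself rests only on (ii).
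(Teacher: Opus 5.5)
Your proof is correct and matches the paper's argument: the paper likewise applies Cauchy--Schwarz over the time index to $\frac1T\sum_n (P_{\mathcal V^\perp}x_n)\,y_n$ and bounds the first factor by $M_\perp$ using (ii). Like you, it notes that the cone condition (i) is not used in the inequality and serves only to motivate why (ii) should hold with a small $M_\perp$.
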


The next result explains why this matters for ridge regression.
Let $\Sigma=\sum_i \lambda_i q_i q_i^\top$ denote the covariance eigendecomposition.

\medskip
\noindent
\begin{theorem}[Large variance forces alignment, scalar case]
\label{thm:novanish_scalar}
Let $\Sigma=\frac1T\sum_{n=1}^T x_n x_n^\top$ be the state covariance matrix with eigenvectors $\left\{q _i\right\}_{i=1}^N\subset \mathbb{R}^N$ and let
$c=\frac1T\sum_{n=1}^T x_n y_n\in\mathbb{R}^N$ be the state-target cross-covariance.
Fix a subspace $\mathcal V\subset\mathbb{R}^N$ and denote
$c_{\mathcal V}:=P_{\mathcal V}c$ and
$c_{\mathcal V^\perp}:=P_{\mathcal V^\perp}c$.

Let $q_i$ be an eigenvector of $\Sigma$ corresponding to the eigenvalue $\lambda_i$. Assume that it satisfies
$\|P_{\mathcal V^\perp}q_i\|_2\le\varepsilon$ for some $\varepsilon\in[0,1)$,
that $\|c_{\mathcal V}\|_2$ is nonzero (nondegenerate supervision in $\mathcal V$),
and that the covariance leakage $\delta:=\|c_{\mathcal V^\perp}\|_2$ is small.
Then for each such $q_i$ it holds that
\[
|\langle q_i,c\rangle|
\ \ge\
\sqrt{1-\varepsilon^2}\,\|c_{\mathcal V}\|_2
\;-\;
\varepsilon\,\delta.
\]
In particular, if $\varepsilon$ and $\delta$ are small and
$\|c_{\mathcal V}\|_2$ is not small, then directions selected by large
variance cannot have negligible alignment with the target.  
\end{theorem}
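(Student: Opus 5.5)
The approach is to split the eigenvector $q_i$ (unit norm) along $\mathcal V$ and $\mathcal V^\perp$, split $c$ the same way, and bound the two resulting pieces of $\langle q_i,c\rangle$ separately. Since $P_{\mathcal V}$ and $P_{\mathcal V^\perp}$ are complementary orthogonal projectors, the cross terms drop out and
\[
\langle q_i,c\rangle=\langle P_{\mathcal V}q_i,\,c_{\mathcal V}\rangle+\langle P_{\mathcal V^\perp}q_i,\,c_{\mathcal V^\perp}\rangle .
\]
The reverse triangle inequality then gives $|\langle q_i,c\rangle|\ge|\langle P_{\mathcal V}q_i,c_{\mathcal V}\rangle|-|\langle P_{\mathcal V^\perp}q_i,c_{\mathcal V^\perp}\rangle|$.

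\emph{The transverse term.} By Cauchy--Schwarz it is bounded by $\|P_{\mathcal V^\perp}q_i\|\,\|c_{\mathcal V^\perp}\|\le\varepsilon\delta$. This is the subtracted term in the claim, and nothing beyond the hypotheses is needed here.

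\emph{The in-subspace term.} Pythagoras and $\|q_i\|=1$ give $\|P_{\mathcal V}q_i\|=\sqrt{1-\|P_{\mathcal V^\perp}q_i\|^2}\ge\sqrt{1-\varepsilon^2}>0$. It therefore remains to show
\[
|\langle P_{\mathcal V}q_i,c_{\mathcal V}\rangle|\ \ge\ \|P_{\mathcal V}q_i\|\,\|c_{\mathcal V}\|,
\]
that is, equality in Cauchy--Schwarz inside $\mathcal V$. This is the step I expect to be the real obstacle. It holds automatically when $\dim\mathcal V=1$: then $P_{\mathcal V}q_i$ and $c_{\mathcal V}$ are both multiples of the same unit vector $v$ spanning $\mathcal V$, so $|\langle P_{\mathcal V}q_i,c_{\mathcal V}\rangle|=|\langle q_i,v\rangle|\,|\langle c,v\rangle|=\|P_{\mathcal V}q_i\|\,\|c_{\mathcal V}\|$.

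For $\dim\mathcal V>1$ the inequality can fail, since $P_{\mathcal V}q_i$ may be orthogonal to $c_{\mathcal V}$ within $\mathcal V$. I would handle this in one of two ways:
\begin{itemize}
\item state the result for the one-dimensional case, which matches the cone picture of Figure~\ref{fig:cone} and the low-dimensional input-determined subspace; or
\item for general $\mathcal V$, replace $\|c_{\mathcal V}\|_2$ by the component of $c_{\mathcal V}$ along the unit vector $P_{\mathcal V}q_i/\|P_{\mathcal V}q_i\|$.
\end{itemize}
With the second choice the same chain of inequalities goes through verbatim.

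Combining the two estimates gives $|\langle q_i,c\rangle|\ge\sqrt{1-\varepsilon^2}\,\|c_{\mathcal V}\|_2-\varepsilon\delta$. For the ``in particular'' statement, Theorem~\ref{thm:approx_covleak_scalar} supplies smallness of $\delta$ independently of $T$. Then, for small $\varepsilon$, the right-hand side stays close to $\|c_{\mathcal V}\|_2$, so an eigenvector with small transverse component cannot be nearly orthogonal to $c$.
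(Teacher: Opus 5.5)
Your decomposition is the one the paper uses. The paper proves the scalar statement as the case $p=1$ of Theorem~\ref{thm:novanish_vector_robust}: split $q_i$ and $C$ along $\mathcal V\oplus\mathcal V^\perp$, drop the cross terms, apply the reverse triangle inequality, and use Cauchy--Schwarz on the transverse term. The step you flag as the obstacle is exactly where that proof is unsound. It asserts $\|C_{\mathcal V}^\top q_{\mathcal V}\|\ge\sigma_{\min}(C_{\mathcal V})\|q_{\mathcal V}\|$. For $p=1$ one has $\sigma_{\min}(C_{\mathcal V})=\|c_{\mathcal V}\|$, so this is precisely the equality case $|\langle q_{\mathcal V},c_{\mathcal V}\rangle|\ge\|q_{\mathcal V}\|\,\|c_{\mathcal V}\|$ of Cauchy--Schwarz that you declined to assume. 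That bound holds for every $q_{\mathcal V}\in\mathcal V$ only when the columns of $C_{\mathcal V}$ span $\mathcal V$, which needs $p\ge\dim\mathcal V$; for a scalar target this means $\dim\mathcal V=1$.

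Your caution is justified: the statement as written is false for $\dim\mathcal V\ge 2$. Take $N=3$, $\mathcal V=\operatorname{span}\{e_1,e_2\}$, $T=4$, $x_n=2e_1,-2e_1,e_2,-e_2$ and $y_n=0,0,1,-1$ (a finite version of the paper's Example~2). Both sequences are centered, $\Sigma=2e_1e_1^\top+\tfrac12 e_2e_2^\top$ and $c=\tfrac12 e_2$. The top-variance eigenvector $q_1=e_1$ has $\varepsilon=\delta=0$, yet $\langle q_1,c\rangle=0<\tfrac12=\|c_{\mathcal V}\|$.

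Of your two repairs, the first is the substantive one. For $\dim\mathcal V=1$ your argument proves the stated bound completely. This is also the case the paper actually relies on with scalar inputs: there $v_n=A\phi(u_n)$ is always a multiple of $A$, so the increment-spine is $\operatorname{span}\{A\}$, and the numerical section reports $d=1$. Your second repair is correct but nearly tautological. The lower bound then depends on $q_i$ itself, so it no longer rules out a large-variance eigenvector orthogonal to $c$ inside $\mathcal V$, which is the scenario the theorem is meant to exclude. The analogous fix for the vector theorem is to assume $\operatorname{range}(C_{\mathcal V})=\mathcal V$ and take $\rho$ to be the $d$-th singular value of $C_{\mathcal V}$.
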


Together, these results show that controlling the increment geometry simultaneously shapes the covariance spectrum and the ridge-relevant alignment. Towards that end, we define $v_n := A  u_n$ and $\Delta v_n := v_{n+1} - v_{n}$ and  the subspace 
\begin{equation}
\label{increment-spine}
\mathcal V := \operatorname{span}\{\Delta v_n \}
\subset \mathbb{R}^N,
\end{equation}
and call it the  {\bfi increment-spine} in state space. In practice, we shall sometimes take the span of its dominant $d$-dimensional PCA components exclusively since it forms the low-dimensional structural core along which state increments predominantly evolve. Under ergodicity, the empirical covariance of the forcing increments $v_n$ converges almost surely to its population covariance $M _{\mathcal{V}}:=\mathbb{E}[v_n v_n^\top]$. If the leading eigenspace of $M_{\mathcal{V}}$ is separated by a spectral gap, then the associated PCA subspace $\mathcal V$ is uniquely defined and is therefore identical for all typical realizations.

In order to provide further intuition about the reach of these results, we now focus on a linear system with scalar inputs $u _n $ given by
\begin{equation}
\label{linear system without phi}
x_{n+1} = A u_n + \beta B x_n, \quad \mbox{$0<\beta<1$}.
\end{equation}
The connectivity matrix $B$ does not play any role in the definition of the increment-spine space \eqref{increment-spine}, but, as we see later on in Sections \ref{Construction of the linear reservoir} and \ref{sec: general reservoirs}, this matrix will be the main tool to achieve cone-concentration in practice. In other words, we shall present a constructive methodology for designing the matrix $B$ so that the increment-spine subspace $\mathcal V $ is nearly invariant under the action of $B$. We emphasize that we do not demand exact invariance since such an invariance would not be stable under perturbations, and because $\mathcal V $ in practice is determined by a PCA cloud.  

In order to further illustrate the importance of the (approximate) $B$-invariance of $\mathcal{V} $, we start by noting that for the linear system, the increment recursion can be written as
\[
\Delta x_{n+1} = v_n + \beta B\,\Delta x_n.
\]
This shows that the transverse growth of the state increments is entirely controlled by the action of
$P_{\mathcal V^\perp}B$ since $v_n \in \mathcal V$ and hence $P_{\mathcal V^\perp} \Delta x_{n+1} =  P_{\mathcal V^\perp} \beta B\,\Delta x_n$.
By designing $B$ so that its transverse component on $\mathcal V$ is small, we enforce cone concentration of the increments and prevent persistent injection of variance outside $\mathcal V$, which ultimately leads to covariance leakage. Importantly, we shall show in Section \ref{sec: general reservoirs} that this geometric mechanism remains valid for nonlinear $\tanh$-based reservoirs like in \eqref{eq:esn_update}, that is, we shall prove that component-wise Lipschitz nonlinearities cannot create large transverse components, and therefore preserve the cone structure induced by the matrix $B$.

These considerations lead to the following result, which quantitatively formalizes the geometric insights stated above in the linear reservoir case, linking the exact invariance of $\mathcal V$, that is, $B \mathcal V \subseteq \mathcal V$, to an optimal reduction in the ridge regression training error. Later on, in Section~\ref{subsec:training_error}, we present a result (Theorem~\ref{thm:subspace_ridge_vector}) in which $\mathcal{V} $ is only approximately  $B$-invariant, and we quantify the improvement in the training error. 
\begin{theorem}[Subspace invariance and ridge training error]
\label{thm:subspace_ridge_scalar}
Consider the linear reservoir
\[
x_{n+1}=A u_n+\beta B x_n,\qquad 0<\beta<1,
\]
driven by centered scalar inputs $\{u_n\}_{n=1}^T$, $B$ is orthogonal, and the target $y_n=u_n$.
Define the empirical
moments
\[
\Sigma=\frac1{T }\sum_{n=  1}^T x_nx_n^\top,
\qquad
c=\frac1{T }\sum_{n=  1}^T x_nu_n,
\qquad
T =T-n_0 .
\]
Let $\mathcal V\subset\mathbb{R}^N$ denote the increment-spine, i.e., the $\operatorname{span}\{\Delta v_n \}$ with $v_n = A u_n$ and suppose that
\begin{equation}
\label{invariance condition}
B\mathcal V\subseteq\mathcal V.
\end{equation}
Then $c\in\mathcal V$, and for every $\lambda>0$ the ridge training error satisfies
\begin{equation}
\label{error with gain}
\min_w
\left(
\frac1{T }\sum_{n=  1}^T (w^\top x_n-u_n)^2
+\lambda\|w\|^2
\right)
=
\frac1{T }\sum_{n=  1}^T u_n^2
-
\mathcal G_\lambda,
\quad \mbox{where} \quad
\mathcal G_\lambda:=c^\top(\Sigma+\lambda I)^{-1}c.
\end{equation}
We refer to ${\cal G}_\lambda$ as the {\bfi ridge prediction score}. If the covariance eigenvectors associated with the eigenvalues $\lambda_i\ge\lambda_0$ lie in $\mathcal V$ for some $\lambda _0>0 $, then
\begin{equation}
\label{spectral representation}
\mathcal G_\lambda=\sum_{\lambda_i\ge\lambda_0}
\frac{\langle q_i,c\rangle^2}{\lambda_i+\lambda}.
\end{equation}
Consequently, minimizing the ridge regularized training error is equivalent to maximizing
$\mathcal G_\lambda$. 
\end{theorem}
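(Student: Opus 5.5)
The theorem has three parts, and I will prove them in order of increasing difficulty: the closed-form error identity \eqref{error with gain}, the spectral representation \eqref{spectral representation}, and the claim $c\in\mathcal V$.

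\emph{The error identity.} Write $\sigma_u^2=\frac1T\sum u_n^2$. The ridge objective expands to the quadratic
\[
w^\top(\Sigma+\lambda I)w-2w^\top c+\sigma_u^2 .
\]
Since $\Sigma+\lambda I$ is positive definite for $\lambda>0$, the unique minimizer is $w=(\Sigma+\lambda I)^{-1}c$. Substituting it back gives the minimum value $\sigma_u^2-c^\top(\Sigma+\lambda I)^{-1}c$. This is \eqref{error with gain}, and the final sentence of the theorem (minimizing the error is equivalent to maximizing $\mathcal G_\lambda$) follows immediately, because $\sigma_u^2$ does not depend on $w$ or on the design.

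\emph{The spectral representation.} Expand in the eigenbasis of $\Sigma$:
\[
\mathcal G_\lambda=\sum_i \frac{\langle q_i,c\rangle^2}{\lambda_i+\lambda}.
\]
Suppose $c\in\mathcal V$. I will read the hypothesis as saying that the eigenvectors with $\lambda_i\ge\lambda_0$ span $\mathcal V$; if they merely lie in $\mathcal V$, an extra dimension-matching assumption is needed. Then every eigenvector with $\lambda_i<\lambda_0$ is orthogonal to that span, hence orthogonal to $\mathcal V$. So $\langle q_i,c\rangle=0$ for those $i$, the low-variance terms drop out, and \eqref{spectral representation} follows.

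\emph{The claim $c\in\mathcal V$.} This is the main obstacle. My plan is to show that the states themselves stay in an affine copy of $\mathcal V$, so that transverse components cannot correlate with the target.
\begin{enumerate}
\item Use the increment recursion $\Delta x_{n+1}=\Delta v_n+\beta B\Delta x_n$, where I read the forcing term as $\Delta v_n$ (this matches the definition of $\mathcal V$ as the span of the $\Delta v_n$).
\item Since $B\mathcal V\subseteq\mathcal V$, projecting onto $\mathcal V^\perp$ gives
\[
P_{\mathcal V^\perp}\Delta x_{n+1}=\beta P_{\mathcal V^\perp}B\Delta x_n .
\]
\item Because $B$ is orthogonal and leaves $\mathcal V$ invariant (and $\mathcal V$ is finite-dimensional), it also leaves $\mathcal V^\perp$ invariant, so $P_{\mathcal V^\perp}B=BP_{\mathcal V^\perp}$. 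Hence the transverse increment contracts:
\[
P_{\mathcal V^\perp}\Delta x_{n+1}=\beta B\,P_{\mathcal V^\perp}\Delta x_n .
\]
\item Under the standing assumption of the exact bi-infinite ESP solution (after washout), iterating backwards and letting the number of steps tend to infinity sends this transverse increment to zero.
\item Therefore $P_{\mathcal V^\perp}x_n$ is constant in $n$. The sequences are centered, so this constant is zero, which puts $x_n\in\mathcal V$ for all $n$.
\item Consequently $c=\frac1T\sum_n x_nu_n$ is a combination of vectors in $\mathcal V$, so $c\in\mathcal V$.
\end{enumerate}

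\emph{Fallback.} If centering of the states is only meant approximately, I would instead argue that the constant transverse part $\bar x_\perp$ contributes $\bar x_\perp\cdot\frac1T\sum_n u_n=0$ to $c$, because the inputs are centered. This still gives $P_{\mathcal V^\perp}c=0$. It is also the conclusion one gets from Theorem~\ref{thm:approx_covleak_scalar} with $\theta=0$, once the transverse part is pinned down.
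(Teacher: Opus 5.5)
Your proof is correct, but for the key claim $c\in\mathcal V$ you take a different route from the paper. The paper unrolls $x_n=(\beta B)^n x_0+\sum_k\beta^kB^kAu_{n-1-k}$, drops the transient, and notes that the Krylov span $\operatorname{span}\{B^kA\}$ lies in $\mathcal V$ by invariance. This tacitly uses $\operatorname{range}A\subseteq\mathcal V$, which holds only because, for nonconstant scalar inputs, $\Delta v_n=A(u_{n+1}-u_n)$ forces $\mathcal V=\operatorname{span}\{A\}$; the hypothesis then really just says $BA=\pm A$.

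You instead:
\begin{enumerate}
\item project the increment recursion onto $\mathcal V^\perp$;
\item use orthogonality to get $B\mathcal V^\perp=\mathcal V^\perp$;
\item let the contraction $\beta B$ annihilate the transverse increments along the bounded ESP trajectory;
\item remove the remaining constant transverse offset by centering.
\end{enumerate}

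What each route buys:
\begin{itemize}
\item Your route uses only $\Delta v_n\in\mathcal V$, the literal definition of the increment-spine. It therefore extends verbatim to vector inputs or feature maps, where $\operatorname{span}\{\Delta v_n\}$ need not contain $v_n$.
\item The cost is extra reliance on boundedness of the ESP trajectory and on centering. Your fallback $P_{\mathcal V^\perp}c=\bar x_\perp\,\frac1T\sum_n u_n=0$ is the cleanest way to finish.
\item The paper's series argument places each $x_n$ in $\mathcal V$ directly, without centering.
\end{itemize}
The ridge identity is the same completing-the-square computation in both.

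Your caveat on the spectral representation is well founded. The paper's proof only cites $\Sigma=\sum_i\lambda_iq_iq_i^\top$ and never shows that the terms with $\lambda_i<\lambda_0$ vanish. Under the literal ``lie in $\mathcal V$'' hypothesis they need not vanish. In the scalar case $\Sigma=s\,aa^\top$ with $a=A/\|A\|$, so any $\lambda_0>s$ makes the hypothesis vacuous, while $\mathcal G_\lambda=\langle a,c\rangle^2/(s+\lambda)$ is generally nonzero. Your ``span $\mathcal V$'' reading is exactly what the orthogonality argument requires.
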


\begin{proof} 
Unrolling the recursion gives
$x_n=(\beta B)^n x_0+\sum_{k=0}^{n-1}\beta^k B^k A u_{n-1-k}$.
Since $\|\beta B\|<1$, the transient $(\beta B)^n x_0$ decays geometrically and after a washout the empirical moments depend only on the input-driven component, which lies in the block Krylov span of  $(B,A)$, that is, $\operatorname{span}\{B^kA:k\ge0\}$. If $B\mathcal V\subseteq\mathcal V$, then this span is contained in $\mathcal V$, hence $c\in\mathcal V$. Expanding the empirical loss yields
$\frac1{T }\sum_{n=  1}^T (w^\top x_n-u_n)^2=w^\top\Sigma w-2w^\top c+\frac1{T }\sum_{n=  1}^T u_n^2$,
so $J_\lambda(w)=w^\top(\Sigma+\lambda I)w-2w^\top c+\frac1{T }\sum u_n^2$. Completing the square (at $w_\lambda=(\Sigma+\lambda I)^{-1}c$) gives
$J_\lambda(w)=(w-w_\lambda)^\top(\Sigma+\lambda I)(w-w_\lambda)+\frac1{T }\sum u_n^2-c^\top(\Sigma+\lambda I)^{-1}c$.
Since $\Sigma+\lambda I\succ0$, the minimum is attained at $w_\lambda$, yielding the stated identity. The spectral representation \eqref{spectral representation} follows from $\Sigma=\sum_i\lambda_i q_iq_i^\top$.
\end{proof}

\subsection{Standing of our work}
A large body of work studies reservoir performance through {\bfi memory capacity} (MC) \cite{Jaeger:2002},
that is, the ability of a recurrent state to reconstruct past inputs from a linear readout.
Recent analyses \cite{RC15} emphasize that, for commonly used random reservoirs, MC is often governed
by global controllability-type properties of the pair $(B,A)$ (e.g., ranks of associated
controllability matrices in linear settings) and by generic bounds depending on the number of neurons and the input second-order dependence structure in nonlinear settings. More specifically, the forecasting--memory framework of~\cite{RC15} develops general capacity bounds and proves, in the classical linear/i.i.d.\ case, that MC equals the rank of a controllability matrix, stressing the role of input statistics and global expressivity. However, some studies~\cite{RC23, RC34} highlight that random ESN parameterizations tend to yield near-maximal MC under broad conditions, calling into question MC as a discriminative design objective in the linear/random regime. 

Our approach differs in that we do not seek to maximize a global capacity (rank) measure.
Instead, we shape the \emph{geometry of reservoir increments} so that the state cloud effectively becomes
a ``thin affine tube": increments $\Delta x_n$ concentrate in a narrow cone around a
data-defined subspace $\mathcal V:=\mathrm{span}\{v_n\}$ determined by the input-induced
forcing directions $v_n$.
This geometric concentration yields an effectively low-rank covariance spectrum and, crucially,
controls {\bfi covariance leakage} of supervision into $\mathcal V^\perp$, which directly
governs the stability of ridge regression readouts.
In this sense, ``memory'' is not treated as an input-agnostic global invariant of the reservoir; rather, it is structured by enforcing (approximate) invariance of $\mathcal V$ under the feedback dynamics, so that the relevant Krylov chain closes (up to controlled leakage) inside $\mathcal V$ and ridge regression resolves informative directions without amplifying noise.

\paragraph{Organization of the paper.}
The paper proceeds in three conceptual layers, separating geometric structure from the reservoir's linearity or nonlinearity. We first establish that cone concentration
of state increments around a subspace $\mathcal V$ implies small
covariance leakage outside $\mathcal V$.
This result is purely geometric and model-independent
(see Theorem~\ref{thm:approx_covleak_scalar} (already stated)
and Theorem~\ref{thm:approx_covleak} in Section \ref{Cone angle and Spectral Shaping}).

We then prove that when the dominant covariance eigenvectors and the cross-covariance are nearly confined to $\mathcal V$, high-variance directions cannot exhibit vanishing alignment with the supervision (Theorem~\ref{thm:novanish_scalar} (already stated) and Theorem \ref{thm:novanish_vector_robust} in Section \ref{Cone angle and Spectral Shaping}).   Consequently, ridge-relevant variance concentrates in the same subspace that carries most of the covariance structure. This result is also model-independent.

In Section~\ref{Construction of the linear reservoir}, we construct a connectivity matrix $B$ that ``aligns" variance with supervision, yielding minimal ridge training error (Theorems~\ref{thm:subspace_ridge_scalar} and~\ref{thm:subspace_ridge_vector}). Section~\ref{sec: general reservoirs} extends this result to general reservoirs (Theorem~\ref{thm:cone_ridge_general}). Numerical results are presented in Section~\ref{sec: implementation}, followed by discussion in Section~\ref{sec: discussion}.

\section{Cone angle and spectral shaping}
\label{Cone angle and Spectral Shaping}

We now prove the vector-valued input and target versions of Theorems~\ref{thm:approx_covleak_scalar} and~\ref{thm:novanish_scalar}. Throughout, the Euclidean spaces $\mathbb{R}^m$ are equipped with the standard Euclidean inner product and induced $2$-norm. All matrices have real entries. Given a matrix $A$, the symbol $\left\|A\right\|_F $ denotes its Frobenius norm. In the following paragraphs, given a vector subspace $\mathcal V\subset\mathbb{R}^N$, the symbols $P _{\mathcal{V}}, P_{\mathcal{V}^\perp} $ will denote both the orthogonal projections $P _{\mathcal{V}}, P_{\mathcal{V}^\perp} :\mathbb{R}^N \rightarrow \mathbb{R}^N$ and the corresponding matrix expressions $P _{\mathcal{V}}, P_{\mathcal{V}^\perp} \in \mathbb{R}^{N \times N} $. 

\begin{theorem}[Small covariance leakage from cone-concentrated increments]
\label{thm:approx_covleak}
Let $\{x_n\}_{n=0}^T\subset\mathbb{R}^N$ be a sequence of centered states of a system like \eqref{eq:state_space} with
$\Delta x_n=x_{n+1}-x_{n}$ and let
$\{y_n\}_{n=1}^T\subset\mathbb{R}^p$ be centered targets.
Define the state-target cross-covariance by
\[
C:=\frac1T\sum_{n=1}^T x_n y_n^\top.
\]
Fix a $d$-dimensional subspace $\mathcal V\subset\mathbb{R}^N$.
Assume:
\begin{description}
\item [(i)]  \emph{Cone condition on increments}: There exists $\mathcal G\subset\{1,\dots,T\}$, $|\mathcal G|\ge (1-\alpha)T$ and $M_\Delta>0 $ such that 
\[
\|P_{\mathcal V^\perp}\Delta x_n\|\le \tan\theta\,\|P_{\mathcal V}\Delta x_n\|,\qquad n\in\mathcal G,
\]
and $\|\Delta x_n\|\le M_\Delta$ for all $n$.
\item [(ii)]  \emph{Uniform transverse boundedness}: There exists $M_\perp>0$ independent of $T$ with $\|P_{\mathcal V^\perp}x_n\|\le M_\perp$ for $n=1,\dots,T$.
\end{description}
Then
\[
\|P_{\mathcal V^\perp} C\|_F \le M_\perp\,\sigma_y, 
\qquad
\sigma_y:=\Big(\frac1T\sum_{n=1}^T \|y_n\|^2\Big)^{1/2}.
\]
In particular, the covariance leakage ratio $\|P_{\mathcal V^\perp}C\|_F/\|C\|_F$ is small whenever $M_\perp$ is small and $\|C\|_F\neq 0$, independently of $T$.
\end{theorem}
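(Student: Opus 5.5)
The bound follows directly from hypothesis (ii) and the Cauchy--Schwarz inequality; the cone condition (i) is not needed for the inequality itself. Its role is to explain why a small $M_\perp$ is attainable, namely that transverse increments cannot accumulate. The plan is to write $P_{\mathcal V^\perp}C$ as an average of rank-one matrices and bound its Frobenius norm term by term.

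First, by linearity of the projector,
\[
P_{\mathcal V^\perp}C=\frac1T\sum_{n=1}^T (P_{\mathcal V^\perp}x_n)\,y_n^\top .
\]
Since $\|ab^\top\|_F=\|a\|\,\|b\|$, the triangle inequality for $\|\cdot\|_F$ gives
\[
\|P_{\mathcal V^\perp}C\|_F\le\frac1T\sum_{n=1}^T\|P_{\mathcal V^\perp}x_n\|\,\|y_n\|.
\]

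Second, apply (ii) and then Cauchy--Schwarz (equivalently, Jensen) to the empirical average:
\[
\|P_{\mathcal V^\perp}C\|_F\le M_\perp\,\frac1T\sum_{n=1}^T\|y_n\|\le M_\perp\Big(\frac1T\sum_{n=1}^T\|y_n\|^2\Big)^{1/2}=M_\perp\sigma_y .
\]
A slightly sharper route gives the same result. View $P_{\mathcal V^\perp}C$ as $\frac1T X_\perp Y^\top$, where $X_\perp$ has columns $P_{\mathcal V^\perp}x_n$ and $Y$ has columns $y_n$. Then use $\|X_\perp Y^\top\|_F\le\|X_\perp\|_2\|Y\|_F$ together with $\|X_\perp\|_2\le\|X_\perp\|_F\le\sqrt T M_\perp$ and $\|Y\|_F=\sqrt T\sigma_y$. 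Either way the constant $M_\perp$ does not depend on $T$, which gives the $T$-independence claim.

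For the ``in particular'' statement, divide by $\|C\|_F\neq0$ to get $\|P_{\mathcal V^\perp}C\|_F/\|C\|_F\le M_\perp\sigma_y/\|C\|_F$. This ratio is small when $M_\perp$ is small, provided $\sigma_y/\|C\|_F$ stays bounded. That holds under stationarity, since both empirical quantities converge to population moments.

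The only conceptual obstacle is clarifying the role of (i). I would add a remark tying it to (ii). Writing $x_n=x_0+\sum_{k<n}\Delta x_k$, the cone condition bounds each transverse increment by $\tan\theta\,\|P_{\mathcal V}\Delta x_k\|\le\tan\theta\,M_\Delta$ for $k\in\mathcal G$, and by $M_\Delta$ otherwise. A naive telescoping bound grows with $n$, so uniform transverse boundedness cannot be derived from (i) alone. It must be supplied by stationarity or contraction of the transverse dynamics: for the linear reservoir, $P_{\mathcal V^\perp}x_n$ obeys a contracting recursion through $P_{\mathcal V^\perp}\beta B$, which yields a geometric-series bound of order $\tan\theta\,M_\Delta/(1-\beta)$. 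This is why (ii) is stated as a separate hypothesis, and the proof of the inequality uses only (ii).
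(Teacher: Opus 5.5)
Your proof is correct and is essentially the paper's own argument: the paper applies Cauchy--Schwarz over the time index to get $\|P_{\mathcal V^\perp}C\|_F\le\bigl(\frac1T\sum_{n}\|P_{\mathcal V^\perp}x_n\|^2\bigr)^{1/2}\sigma_y\le M_\perp\sigma_y$, which is your matrix-form route, and it also notes that only (ii) is used while (i) just motivates why (ii) should hold. Your caveat that the leakage ratio is small only when $\sigma_y/\|C\|_F$ stays bounded is a fair sharpening of the informal ``in particular'' clause; your closing heuristic is not needed, and its claimed order $\tan\theta\,M_\Delta/(1-\beta)$ is not what the recursion $P_{\mathcal V^\perp}x_{n+1}=\beta P_{\mathcal V^\perp}Bx_n$ gives, since that recursion's size is governed by the leakage $\|P_{\mathcal V^\perp}BP_{\mathcal V}\|$ rather than by the cone angle.
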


\begin{proof}
By Cauchy--Schwarz applied to the time index,
\[
\|P_{\mathcal V^\perp}C\|_F
=
\Big\|\frac1T\sum_{n=1}^T (P_{\mathcal V^\perp}x_n)\,y_n^\top\Big\|_F
\le
\Big(\frac1T\sum_{n=1}^T \|P_{\mathcal V^\perp}x_n\|^2\Big)^{1/2}\sigma_y
\le
M_\perp\sigma_y. \qedhere
\] Although the proof uses only the transverse boundedness assumption (ii), the increment cone condition in (i) provides a contextual motivation -- the geometric mechanism motivating why such boundedness should hold in reservoir systems. Indeed, if the increments remain persistently concentrated near $\mathcal V$ and the transverse dynamics are stable, then the accumulated transverse component of the trajectory is expected to remain uniformly controlled. 
\end{proof}

\begin{remark} \rm 
    Although the bound above on $\|P_{\mathcal V^\perp}C\|_F$ may grow with $T$ because it controls the
transverse component by a cumulative sum of increments, the quantities entering
the cone condition have a natural stationary interpretation. If the input process
$\{u_n\}$ is stationary then the
corresponding reservoir state process $\{x_n\}$ is stationary after washout, and
so is the increment process $\{\Delta x_n\}$. Moreover, under ergodicity, the
empirical covariance of the forcing increments defining the input-determined
subspace $\mathcal V$ is identical for all typical realizations and thus effectively in the large-sample limit, and the angle $
\angle(\Delta x_n,\mathcal V)$
is itself a stationary observable along such trajectories. Thus cone concentration
can be assessed empirically by estimating the distribution
of these angles and the leakage bound on $\|P_{\mathcal V^\perp}C\|_F$  is a
finite-sample worst-case estimate.
\end{remark}

\begin{theorem}[Robust no-vanishing alignment under cone concentration and small covariance leakage]
\label{thm:novanish_vector_robust}
Let $\Sigma=\frac1T\sum_{n=1}^T x_n x_n^\top$ and
$C=\frac1T\sum_{n=1}^T x_n y_n^\top\in\mathbb{R}^{N\times p}$.
Fix a $d$-dimensional subspace $\mathcal V\subset\mathbb{R}^N$ and define
$C_{\mathcal V^\perp}:=P_{\mathcal V^\perp}C$ and
$C_{\mathcal V}:=P_{\mathcal V}C$.
Assume:
\setlength{\itemsep}{2pt}
\setlength{\parskip}{0pt}
\begin{description}
\item[(A1)] (\emph{Cone concentration of dominant covariance directions})
For some $\varepsilon\in[0,1)$, let $q_i$ be the eigenvector of $\Sigma$ corresponding to the eigenvalue $\lambda _i $ and assume that it satisfies $\|P_{\mathcal V^\perp}q_i\| \le \varepsilon$ (equivalently $\|P_{\mathcal V}q_i\|\ge \sqrt{1-\varepsilon^2}$).
\item[(A2)] (\emph{Nondegenerate supervision within $\mathcal V$})
$\rho := \sigma_{\min}(C_{\mathcal V})>0$.
\item[(A3)] (\emph{Small covariance leakage})
$\delta := \|C_{\mathcal V^\perp}\|$ is small.
\end{description}
Then,
\[
\|q_i^\top C\|
\ge
\sqrt{1-\varepsilon^2}\,\rho - \varepsilon\,\delta.
\]
In particular, if $\varepsilon$ and $\delta$ are small and $\rho$ is not,
then high-variance directions cannot exhibit negligible ridge-relevant alignment $\|q_i^\top C\|$. 
\end{theorem}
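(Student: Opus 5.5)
The plan is a direct decomposition argument: split the unit eigenvector $q_i$ along $\mathcal V\oplus\mathcal V^\perp$ and bound the two resulting pieces of $q_i^\top C$ separately. Write $a:=P_{\mathcal V}q_i$ and $b:=P_{\mathcal V^\perp}q_i$, so that $q_i=a+b$ and $\|a\|^2+\|b\|^2=\|q_i\|^2=1$. By (A1), $\|b\|\le\varepsilon$ and hence $\|a\|\ge\sqrt{1-\varepsilon^2}$.

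First I will use that orthogonal projectors are symmetric and idempotent. This gives $a^\top C=q_i^\top P_{\mathcal V}P_{\mathcal V}C=a^\top C_{\mathcal V}$, and likewise $b^\top C=b^\top C_{\mathcal V^\perp}$. Therefore
\[
q_i^\top C = a^\top C_{\mathcal V} + b^\top C_{\mathcal V^\perp}.
\]
The reverse triangle inequality in $\mathbb{R}^p$ then yields
\[
\|q_i^\top C\|\ \ge\ \|C_{\mathcal V}^\top a\| - \|C_{\mathcal V^\perp}^\top b\|.
\]
The leakage term is handled by the operator-norm bound $\|C_{\mathcal V^\perp}^\top b\|\le\|C_{\mathcal V^\perp}\|\,\|b\|\le\delta\varepsilon$. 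If $\delta$ is read as a Frobenius norm, the same bound holds, since the Frobenius norm dominates the operator norm.

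The main step, and the only delicate point, is the lower bound $\|C_{\mathcal V}^\top a\|\ge\rho\|a\|$ for $a\in\mathcal V$. Here I must fix the meaning of $\sigma_{\min}(C_{\mathcal V})$. As an $N\times p$ matrix whose column space lies in the $d$-dimensional space $\mathcal V$, $C_{\mathcal V}$ has at most $\min(d,p)$ nonzero singular values. The literal smallest of all singular values would therefore vanish whenever $N>d$, and (A2) would fail trivially. I will therefore interpret $\rho$ as the smallest singular value of $C_{\mathcal V}$ viewed as a map relative to $\mathcal V$. Concretely, take an orthonormal basis matrix $U\in\mathbb{R}^{N\times d}$ of $\mathcal V$, so that $P_{\mathcal V}=UU^\top$, and set $\rho=\sigma_{\min}(U^\top C)$, the $d$-th singular value of the $d\times p$ matrix $U^\top C$. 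The hypothesis $\rho>0$ then implicitly requires $p\ge d$, i.e.\ $U^\top C$ has full row rank.

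With this reading, write $a=Uz$ with $\|z\|=\|a\|$. Then
\[
C_{\mathcal V}^\top a = C^\top UU^\top Uz = (U^\top C)^\top z,
\]
and by the variational characterization of the smallest singular value of a full-row-rank matrix, $\|(U^\top C)^\top z\|\ge\rho\|z\|$. Hence $\|C_{\mathcal V}^\top a\|\ge\rho\|a\|\ge\rho\sqrt{1-\varepsilon^2}$.

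Combining the two estimates gives $\|q_i^\top C\|\ge\sqrt{1-\varepsilon^2}\,\rho-\varepsilon\delta$, which is the claimed bound. The "in particular" clause then follows immediately. Specializing to $p=1$ makes $\rho=\|c_{\mathcal V}\|$, which recovers Theorem~\ref{thm:novanish_scalar}.
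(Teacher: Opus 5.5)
Your argument is correct and is essentially the paper's proof. It uses the same splitting of $q_i$ and $C$ along $\mathcal V\oplus\mathcal V^\perp$ with vanishing cross terms, then the reverse triangle inequality, then the two bounds $\|C_{\mathcal V}^\top P_{\mathcal V}q_i\|\ge\rho\,\|P_{\mathcal V}q_i\|$ and $\|C_{\mathcal V^\perp}^\top P_{\mathcal V^\perp}q_i\|\le\varepsilon\delta$. Your reading of $\rho$ as $\min_{a\in\mathcal V,\,\|a\|=1}\|C^\top a\|=\sigma_{\min}(U^\top C)$ is exactly what the paper's step $\|C_{\mathcal V}^\top q_{\mathcal V}\|\ge\sigma_{\min}(C_{\mathcal V})\|q_{\mathcal V}\|$ silently needs: that step fails for some $q_{\mathcal V}\in\mathcal V$ when $\operatorname{rank}C_{\mathcal V}<d$, e.g.\ $p=1<d$. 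For the same reason, your $p=1$ specialization recovers the scalar theorem only when $\dim\mathcal V=1$.
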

\begin{proof}
Fix $q=q_i$ and decompose
$q=q_{\mathcal V}+q_{\mathcal V^\perp}$ with $q_{\mathcal V}=P_{\mathcal V}q$ and
$q_{\mathcal V^\perp}=P_{\mathcal V^\perp}q$. Write also $C=C_{\mathcal V}+C_{\mathcal V^\perp}$ with
$C_{\mathcal V}=P_{\mathcal V}C$ and $C_{\mathcal V^\perp}=P_{\mathcal V^\perp}C$.
Then
\[
q^\top C
=
q_{\mathcal V}^\top C_{\mathcal V}
+
q_{\mathcal V}^\top C_{\mathcal V^\perp}
+
q_{\mathcal V^\perp}^\top C_{\mathcal V}
+
q_{\mathcal V^\perp}^\top C_{\mathcal V^\perp}.
\]
Since the columns of $C_{\mathcal V}$ lie in $\mathcal V$ and $q_{\mathcal V^\perp}\in\mathcal V^\perp$,
we have $q_{\mathcal V^\perp}^\top C_{\mathcal V}=0$.
Likewise, the columns of $C_{\mathcal V^\perp}$ lie in $\mathcal V^\perp$, so $q_{\mathcal V}^\top C_{\mathcal V^\perp}=0$.
Hence
\[
q^\top C = q_{\mathcal V}^\top C_{\mathcal V} + q_{\mathcal V^\perp}^\top C_{\mathcal V^\perp}.
\]
Therefore,
\[
\|q^\top C\|
\ge
\|q_{\mathcal V}^\top C_{\mathcal V}\|
-
\|q_{\mathcal V^\perp}^\top C_{\mathcal V^\perp}\|.
\]
By (A2),
\[
\|q_{\mathcal V}^\top C_{\mathcal V}\|
=\|(C_{\mathcal V})^\top q_{\mathcal V}\|
\ge
\sigma_{\min}(C_{\mathcal V})\,\|q_{\mathcal V}\|
=
\rho\,\|q_{\mathcal V}\|.
\]
By (A3),
\[
\|q_{\mathcal V^\perp}^\top C_{\mathcal V^\perp}\|\le \|q_{\mathcal V^\perp}\|\,\|C_{\mathcal V^\perp}\|
\le \varepsilon\,\delta,
\]
using (A1) to bound $\|q_{\mathcal V^\perp}\|\le\varepsilon$.
Finally, (A1) also gives $\|q_{\mathcal V}\|\ge\sqrt{1-\varepsilon^2}$, so
\[
\|q^\top C\|
\ge
\sqrt{1-\varepsilon^2}\,\rho - \varepsilon\,\delta. \qedhere
\] 
\end{proof}

\begin{remark}[Scalar target]\label{rem:novanish_scalar_robust}
\normalfont
When $p=1$, $C=c\in\mathbb{R}^N$, and $\|q_i^\top C\|_2=|\langle q_i,c\rangle|$, Theorem~\ref{thm:novanish_vector_robust} yields that
\[
|\langle q_i,c\rangle|
\ \ge\
\sqrt{1-\varepsilon^2}\,\|P_{\mathcal V}c\|_2
\;-\;
\varepsilon\,\|P_{\mathcal V^\perp}c\|_2,
\]
for all $i$ with $\lambda_i\ge\lambda_0$.
Thus, if dominant covariance directions lie in a narrow cone around $\mathcal V$
and the covariance leakage $\|P_{\mathcal V^\perp}c\|/\|c\|$ is small,
then large-variance directions cannot have negligible alignment with the target.
\end{remark}

The next proposition enunciates a simple fact that gives an idea of how the cone angle $\theta$ constrains the ratio of the variances in the orthogonal subspaces:

\begin{proposition}[Ratio of transverse to longitudinal variance]
\label{prop:trace-dominance}
Let $P_{\mathcal V}$ and $P_{\mathcal V^\perp}$ denote the orthogonal
projectors onto $\mathcal V$ and $\mathcal V^\perp$, respectively.
If the states satisfy the cone condition $\|P_{\mathcal V^\perp} x_n\|
\le
\tan\theta \, \|P_{\mathcal V} x_n\|
\quad \text{for all } n,$
then
\[
\operatorname{tr}\!\big(
P_{\mathcal V^\perp} \Sigma P_{\mathcal V^\perp}
\big)
\le
\tan^2\theta\,
\operatorname{tr}\!\big(
P_{\mathcal V} \Sigma P_{\mathcal V}
\big).
\]
In particular, if the data are centered, then
\[
\frac{
\text{variance in } \mathcal V^\perp
}{
\text{variance in } \mathcal V
}
\le
\tan^2\theta.
\]
\end{proposition}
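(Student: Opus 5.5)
The plan is to use the identity $\operatorname{tr}(P M P)=\frac1T\sum_n\|P x_n\|^2$ for an orthogonal projector $P$ and $M=\Sigma$. Then the cone condition can be squared and averaged over $n$.

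First, since $P_{\mathcal V^\perp}$ is symmetric and idempotent, I expand
\[
\operatorname{tr}\big(P_{\mathcal V^\perp}\Sigma P_{\mathcal V^\perp}\big)
=\frac1T\sum_{n=1}^T\operatorname{tr}\big(P_{\mathcal V^\perp}x_nx_n^\top P_{\mathcal V^\perp}\big)
=\frac1T\sum_{n=1}^T\|P_{\mathcal V^\perp}x_n\|^2 .
\]
This uses linearity of the trace and $\operatorname{tr}(zz^\top)=\|z\|^2$ with $z=P_{\mathcal V^\perp}x_n$. The same computation gives $\operatorname{tr}(P_{\mathcal V}\Sigma P_{\mathcal V})=\frac1T\sum_n\|P_{\mathcal V}x_n\|^2$.

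Next, both sides of the cone inequality $\|P_{\mathcal V^\perp}x_n\|\le\tan\theta\,\|P_{\mathcal V}x_n\|$ are nonnegative, so I square it to get $\|P_{\mathcal V^\perp}x_n\|^2\le\tan^2\theta\,\|P_{\mathcal V}x_n\|^2$ for every $n$. Summing over $n$ and dividing by $T$ then yields the trace inequality directly.

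For the ``in particular'' statement, the data are centered, so $\Sigma$ is the empirical covariance. The variance captured in a subspace $\mathcal W$ is by definition the total variance of the projected data, $\operatorname{tr}(P_{\mathcal W}\Sigma P_{\mathcal W})=\operatorname{tr}(P_{\mathcal W}\Sigma)$. Dividing the trace inequality by the variance in $\mathcal V$ gives the ratio bound, provided that variance is positive. If it is zero, the cone condition forces every $x_n=0$ and the statement is vacuous; I will note that degenerate case explicitly.

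There is no real obstacle here. The only points needing care are the squaring step, which is legitimate because both sides are nonnegative, and the degenerate zero-denominator case in the ratio form.
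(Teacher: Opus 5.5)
Your proposal is correct and follows essentially the same route as the paper's proof: rewrite both traces as $\frac1T\sum_n\|P x_n\|^2$ using symmetry and idempotence of the projectors, square the cone condition pointwise, and average over $n$. Your explicit handling of the degenerate case (zero variance in $\mathcal V$ forcing all $x_n=0$) is a small addition that the paper leaves implicit.
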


\begin{proof}
Since $\Sigma=\frac1T\sum_{n=1}^T x_n x_n^\top$ and orthogonal projectors are symmetric and idempotent,
\[
\operatorname{tr}(P_{\mathcal V^\perp}\Sigma P_{\mathcal V^\perp})
=\frac1T\sum_{n=1}^T \|P_{\mathcal V^\perp}x_n\|^2,
\qquad
\operatorname{tr}(P_{\mathcal V}\Sigma P_{\mathcal V})
=\frac1T\sum_{n=1}^T \|P_{\mathcal V}x_n\|^2.
\]
The cone condition implies
$\|P_{\mathcal V^\perp}x_n\|^2\le\tan^2\theta\,\|P_{\mathcal V}x_n\|^2$
for all $n$, and summing yields the result.
\end{proof}

\section{Construction of data-aligned linear reservoirs (DAR)}
\label{Construction of the linear reservoir}

In this section, we use the insights from the previous results to devise a strategy for constructing linear reservoirs that minimize covariance leakage and avoid vanishing alignment under cone concentration. We call the resulting architectures data-aligned reservoirs (DAR), and we present in a general set up with possible a nonlinear feature map $\phi$.

\paragraph{On the role of the input matrix $A$.} Consider a linear reservoir system for which, this time, we allow the presence of nonlinear input functions $\phi: {\Bbb R}^d \rightarrow \mathbb{R}^d$, that is
\begin{equation}
\label{linear system with phi}
x_{n+1} = A \phi(u_n) + \beta B x_n, \quad \mbox{$0<\beta<1$}.
\end{equation}
The exact choice of the input matrix $A$ is not especially important for the geometric mechanism described above, as long as it does not create strong anisotropy inside the increment spine  $\mathcal{V} = \operatorname{span}\{\Delta v_n \}$ with $v_n = A \phi(u_n)$ (generalized from \eqref{increment-spine} to include $\phi$).
In practical terms, this means that the singular values of $A$, when restricted to the image of $\phi$, should all be of comparable size. When this balanced scaling holds, the variance within $\mathcal V$ remains evenly distributed, and no single direction disproportionately dominates the covariance structure.

Under these conditions, the ridge prediction score introduced in \eqref{error with gain}
\[
\mathcal G_\lambda
=
\operatorname{tr}\!\big(C^\top(\Sigma+\lambda I)^{-1}C\big)
\]
is driven mainly by the geometry of $\mathcal V$ and the feedback design encoded in $B$, rather than by the detailed structure of $A$. Only in pathological cases where $A$ dramatically amplifies one direction while suppressing others does the conditioning significantly deteriorate due to variance collapsing into a single dominant mode. Absent such extreme spectral imbalance, however, the qualitative conclusions of the cone-concentration framework remain intact.

\paragraph{Design of Data-aligned reservoir (DAR) matrix $B$.} 
The results in the previous section prescribe that the state increments cone should be cone-concentrated around $\mathcal V$
and the action of $B$ must have \emph{small transverse component} on $\mathcal V$, that is,
\begin{equation}
\label{approximate invariance condition}
\|P_{\mathcal V^\perp}B v\| \ll \|v\|
\quad \text{for } v\in\mathcal V,
\end{equation}
which constitutes an approximate version of the invariance condition \eqref{invariance condition}. Indeed, a sufficient design condition to satisfy \eqref{approximate invariance condition} is that $B\mathcal V\subseteq\mathcal V$. More generally, $B$ should approximately preserve a PCA approximation of $\mathcal V$, that is, $\mathcal{V}$ is in practice constructed out of a PCA cloud.   We realize the construction of $B$ along these lines in three steps.

\subsection{Step 1: Build local bases and form a shift-alignment problem}
Let $U\in\R^{N\times d}$ have $d$ orthonormal columns spanning $\calV$ (obtained, for instance, from an SVD/PCA decomposition of $\left\{v _n\right\}$).
Define {\bfi coordinates} of the increment forcing within $\calV$:
\begin{equation}
p_n := U^\top v_n \in \R^d.
\label{eq:pn}
\end{equation}
Form two block matrices with time-shifted coordinate snapshots as
\begin{equation}
P_{\mathrm{current}} := [p_0,\dots,p_{T-2}],\qquad
P_{\mathrm{next}} := [p_1,\dots,p_{T-1}].
\label{eq:Pblocks}
\end{equation}

\subsection{Step 2: Orthogonal alignment on $\calV$ (Kabsch)}
We choose an orthogonal map $R_d\in O(d)$ that best aligns $P_{\mathrm{current}}$
to $P_{\mathrm{next}}$:
\begin{equation}
R_d
=
\arg\min_{R\in O(d)} \| P_{\mathrm{next}}- R P_{\mathrm{current}}\|_F^2.
\label{eq:Rd-prob}
\end{equation}
This is the classical Kabsch (or orthogonal Procrustes) problem \cite{Schonemann1966, kabsch1976solution, kabsch1978discussion} that admits the following closed-form solution:
if $P_{\mathrm{next}}P_{\mathrm{current}}^\top = U_s \Sigma_s V_s^\top$ is an SVD,
then $R_d=U_s V_s^\top$. 
We emphasize that $R_d$ is a $d\times d$ matrix even though $v_n\in\R^N$ because the vectors $v_n$ are $N$-vectors, but their informative content is assumed to lie in the $d$-subspace $\calV$.
The matrix $U$ provides an orthonormal basis for $\calV$ for which $p_n=U^\top v_n$
are the $d$-dimensional coordinates. Thus $R_d$ describes the desired linear transformation \emph{on the subspace} $\calV$ prescribed by the Kabsch problem.

\subsection{Step 3: Lift $R_d$ to an ambient feedback matrix}
\label{subsec:lift}
Define the {\bfi aligned} feedback operator on $\R^N$ by acting as $R_d$ on $\calV$ and as the identity on $\calV^\perp$:
\begin{equation}
B_{\mathrm{align}}
:= U R_d U^\top + (I-UU^\top).
\label{eq:Bal}
\end{equation}
Then $B_{\mathrm{align}}$ is orthogonal and satisfies $B_{\mathrm{align}}z\in\calV$
for all $z\in\calV$ (no leakage for vectors already in $\calV$).
The matrix $B_{\mathrm{align}}$ is not adequate as yet, although it prevents the leakage from $\calV$ as explained next. 

The Procrustes step determines the action of $B_{\mathrm{align}}$ on the input-determined
subspace $\mathcal V$, but it does not uniquely specify the action on
$\mathcal V^\perp$. We therefore complete the reservoir matrix by an
orthogonal action on $\mathcal V^\perp$. Choosing this action at random
from the Haar measure introduces no preferred direction outside the
data-determined subspace and prevents artificial coherent accumulation
of transverse components. Thus the random orthogonal completion preserves
the Procrustes alignment on $\mathcal V$ while providing an isotropic,
unbiased mixing mechanism in the unresolved complement.

Define the projected reservoir increment
\[
P_{\mathcal{V}^\perp} \Delta x_{n+1}=\sum_{k=0}^{T} y_k,\qquad 
y_k := \beta^k\,P_{\mathcal{V}^\perp} B^k v_{n+1-k}\in \mathcal{V}^\perp .
\]
\begin{equation}
\label{eq:diag-cross}
\Big\|P_{\mathcal{V}^\perp} \Delta x_{n+1}\Big\|^2
=\Big\|\sum_{k=0}^{T} y_k\Big\|^2
=\sum_{k=0}^{T}\|y_k\|^2
+2\sum_{0\le k<j\le T}\langle y_k,y_j\rangle.
\end{equation}
Equation~\eqref{eq:diag-cross} shows that any growth of 
$\|P_{\mathcal{V}^\perp} \Delta x_{n+1}\|$ beyond the root--sum--of--squares level
is entirely controlled by the cross terms 
$\langle y_k,y_j\rangle$.
If the directions of the vectors $y_k$ are generic in the
$p$-dimensional space $\mathcal{V}^\perp$ ($p=N-d$), then it is possible to prove \cite{Vershynin:book} that pairwise inner products are typically of order $O(p^{-1/2})$  by concentration of measure on the sphere.
Consequently, the cross-term sum in~\eqref{eq:diag-cross} fluctuates
around zero and does not accumulate coherently.
In contrast, when $B=I$, the vectors $y_k=\beta^k P_{\mathcal{V}^\perp} v_{n+1-k}$
remain aligned across delays, making the cross terms strictly positive
and yielding near-coherent accumulation.
A random orthogonal reservoir suppresses this alignment by continually rotating leakage directions in $\mathcal{V}^\perp$, thereby enforcing root--sum--of--squares scaling of $\|P_{\mathcal{V}^\perp} \Delta x_{n+1}\|$ up to
dimension-controlled corrections. Consequently, we blend $B_{\mathrm{align}}$ with an independent orthogonal mixer $B_{\text{rand}}\in O(N)$ constructed by drawing a random orthogonal matrix using the invariant Haar measure on $O(N) $ (available by the compactness of this group \cite[Section 2.1.7]{Ortega2004})
using a {\bfi mixing parameter} $\gamma_{\mathrm{mix}}\in[0,1]$:
\begin{equation*}
\widetilde B
:=(1-\gamma_{\mathrm{mix}})B_{\mathrm{align}}+\gamma_{\mathrm{mix}}B_{\text{rand}}.
\end{equation*}
Finally, we want, for reasons that we explain below, that the final connectivity matrix $B$ be orthogonal; we hence define:
\begin{equation*}
B:=\mathrm{polar}(\widetilde B),
\end{equation*}
where $\mathrm{polar}(\widetilde B)$ denotes the orthogonal factor in the polar decomposition
$\widetilde B=UH$ (that is, the closest orthogonal matrix to $\widetilde B$ in Frobenius norm).

\paragraph{Why the restriction to orthogonal matrices.}
One could, in principle, design the connectivity matrix using arbitrary matrices.
However, the cone concentration of the increments requires quantitative
spectral control of the transverse operator
$B_\perp = P_{\mathcal V^\perp} B P_{\mathcal V^\perp}$.
Writing the increment recursion in transverse coordinates yields
\[
\|P_{\mathcal V^\perp}\Delta x_{n+1}\|
\le
\beta \|B_\perp\|_2
\|P_{\mathcal V^\perp}\Delta x_n\|
+
\beta \|P_{\mathcal V^\perp} B P_{\mathcal V}\|_2
\|P_{\mathcal V}\Delta x_n\|.
\]
Thus, geometric stability requires the spectral condition
$\beta \|B_\perp\|_2 < 1$ together with small cross-leakage
$\|P_{\mathcal V^\perp} B P_{\mathcal V}\|_2$.
For a general matrix $B$ this imposes coupled constraints
on both singular values and invariant directions.

In contrast, when $B$ is orthogonal,
all singular values of $\beta B$ equal $\beta$,
so $\|B_\perp\|_2 = 1$ and the stability condition
reduces simply to $\beta<1$, which is already required for fading memory.
Orthogonality therefore, separates memory scaling (controlled by $\beta$)
from geometric alignment (controlled by subspace invariance),
making cone concentration analytically transparent.

\begin{remark}[On not designing $B$ for exact invariance] \label{rem:block} \normalfont 
One may alternatively construct the reservoir matrix so that the decomposition
$\mathbb R^N=\mathcal V\oplus\mathcal V^\perp$ is exactly invariant under the
reservoir dynamics. In this case,
$B=UR_dU^\top+U_\perp R_\perp U_\perp^\top$, where $R_d$ is obtained from the
Procrustes alignment on $\mathcal V$ and $R_\perp\in O(N-d)$ acts on the
orthogonal complement. Such a construction satisfies
$P_{\mathcal V^\perp}BP_{\mathcal V}=0$, so the transverse leakage is minimized
and the reservoir increments become strongly cone-concentrated around
$\mathcal V$. However, if the forcing itself is low-dimensional, then the
reservoir trajectory may remain confined almost entirely within $\mathcal V$.
In this regime, the effective reservoir dynamics may collapse to approximately
$\dim(\mathcal V)$ degrees of freedom, thereby losing the high-dimensional
embedding mechanism responsible for expressive memory representations.
Numerical experiments not shown here indicate that excessively strong cone concentration can therefore degrade forecasting performance despite improving geometric alignment.

The DAR construction intentionally avoids exact invariance. After the mixing and
polar projection step, the resulting orthogonal matrix no longer preserves the
decomposition $\mathcal V\oplus\mathcal V^\perp$, and one typically has
$P_{\mathcal V^\perp}BP_{\mathcal V}\neq 0$. Consequently, the reservoir states
are allowed to explore ambient directions outside $\mathcal V$, producing a
richer nonlinear embedding while still maintaining approximate cone
concentration. For this reason, one should not expect a clean invariant-subspace
characterization of the reservoir matrix. The geometry is instead controlled
heuristically through approximate invariance and small transverse leakage rather
than through an exact block-diagonal decomposition.
\end{remark}

\subsection{Training error with a designed connectivity matrix $B$}
\label{subsec:training_error}
\begin{lemma}[Mixing parameter induces controlled transverse leakage]
\label{lem:mixing-leakage}
Let $B$ be constructed via
\[
\widetilde B
=(1-\gamma_{\mathrm{mix}})B_{\mathrm{align}}
+\gamma_{\mathrm{mix}}B_{\mathrm{rand}},
\qquad
B=\operatorname{polar}(\widetilde B),
\]
where $B_{\mathrm{align}}$ and $B_{\mathrm{rand}}$ are orthogonal matrices and
$B_{\mathrm{align}}\mathcal V\subseteq\mathcal V$.
Then there exists a constant $K>0$, independent of $\gamma_{\mathrm{mix}}$,
such that for all $v\in\mathcal V$,
\[
\|P_{\mathcal V^\perp} B v\|
\le
K\,\gamma_{\mathrm{mix}}\,\|v\|.
\]
In particular, approximate invariance holds with
$\gamma = K\,\gamma_{\mathrm{mix}}$.
\end{lemma}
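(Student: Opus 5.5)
The plan is to compare $B$ directly with $B_{\mathrm{align}}$, exploiting that $B_{\mathrm{align}}$ produces no leakage at all. For $v\in\mathcal V$ we have $B_{\mathrm{align}}v\in\mathcal V$, hence $P_{\mathcal V^\perp}B_{\mathrm{align}}v=0$. This gives
\[
\|P_{\mathcal V^\perp}Bv\|=\|P_{\mathcal V^\perp}(B-B_{\mathrm{align}})v\|\le \|B-B_{\mathrm{align}}\|_2\,\|v\|,
\]
since $\|P_{\mathcal V^\perp}\|_2\le 1$. The lemma therefore reduces to showing $\|B-B_{\mathrm{align}}\|_2\le K\gamma_{\mathrm{mix}}$ with $K$ independent of $\gamma_{\mathrm{mix}}$.

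I would split this with the triangle inequality,
\[
\|B-B_{\mathrm{align}}\|\le \|B-\widetilde B\|+\|\widetilde B-B_{\mathrm{align}}\|.
\]
The second term is explicit: $\widetilde B-B_{\mathrm{align}}=\gamma_{\mathrm{mix}}(B_{\mathrm{rand}}-B_{\mathrm{align}})$. Since both matrices are orthogonal, this gives $\|\widetilde B-B_{\mathrm{align}}\|_2\le 2\gamma_{\mathrm{mix}}$ and $\|\widetilde B-B_{\mathrm{align}}\|_F\le 2\sqrt N\,\gamma_{\mathrm{mix}}$.

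For the first term I would use the variational characterization of the polar factor recalled in the construction. The matrix $B=\mathrm{polar}(\widetilde B)$ is a nearest orthogonal matrix to $\widetilde B$ in Frobenius norm. Because $B_{\mathrm{align}}$ is itself orthogonal, it is an admissible competitor, so $\|B-\widetilde B\|_F\le\|B_{\mathrm{align}}-\widetilde B\|_F$. Combining the two estimates yields
\[
\|B-B_{\mathrm{align}}\|_2\le\|B-B_{\mathrm{align}}\|_F\le 2\|\widetilde B-B_{\mathrm{align}}\|_F\le 4\sqrt N\,\gamma_{\mathrm{mix}},
\]
so $K=4\sqrt N$ works. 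This constant depends only on the ambient dimension, not on $\gamma_{\mathrm{mix}}$.

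The main obstacle is making sure the polar step is well defined and that the nearest-point property holds when $\widetilde B$ is singular. This can happen, for example, when $\gamma_{\mathrm{mix}}=1/2$ and $B_{\mathrm{rand}}=-B_{\mathrm{align}}$. My first approach is to note that the orthogonal Procrustes characterization still applies in that case. Writing $\widetilde B=U_s\Sigma_sV_s^\top$, every orthogonal factor $U_sV_s^\top$ minimizes $\|Q-\widetilde B\|_F$ over $Q\in O(N)$, and the comparison argument above uses only minimality, not uniqueness. Hence the bound holds for whichever polar factor is chosen.

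If one wants a dimension-free constant, I would instead use, for small $\gamma_{\mathrm{mix}}$ (say $\gamma_{\mathrm{mix}}\le 1/4$), the Neumann-series expansion of $(\widetilde B^\top\widetilde B)^{-1/2}$ around $I$. This gives $\|B-\widetilde B\|_2=O(\gamma_{\mathrm{mix}})$. For larger $\gamma_{\mathrm{mix}}$, the trivial bound $\|P_{\mathcal V^\perp}Bv\|\le\|v\|\le 4\gamma_{\mathrm{mix}}\|v\|$ suffices. Either route yields approximate invariance with $\gamma=K\gamma_{\mathrm{mix}}$.
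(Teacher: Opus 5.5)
Your proof is correct, but it bounds $\|B-\widetilde B\|$ by a different mechanism than the paper does.

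\textbf{What differs.} The paper pivots on $\widetilde B$. It uses $P_{\mathcal V^\perp}\widetilde B v=\gamma_{\mathrm{mix}}P_{\mathcal V^\perp}B_{\mathrm{rand}}v$, then bounds $\|B-\widetilde B\|$ perturbatively by writing $B=\widetilde B J^{-1}$ with $J=(\widetilde B^\top\widetilde B)^{1/2}=I+O(\gamma_{\mathrm{mix}})$. You pivot on $B_{\mathrm{align}}$ instead. You control $\|B-\widetilde B\|$ through the nearest-orthogonal-matrix property of the polar factor, using $B_{\mathrm{align}}$ as a competitor.

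\textbf{What your route buys.} It holds for every $\gamma_{\mathrm{mix}}\in[0,1]$, including when $\widetilde B$ is singular and the polar factor is not unique, since you use only minimality. The paper's expansion needs $J$ invertible (guaranteed only for $\gamma_{\mathrm{mix}}<1/2$), and its $O(\cdot)$ constants are local. So the paper silently skips large $\gamma_{\mathrm{mix}}$; your trivial-bound patch in the last paragraph is exactly what covers that regime.

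\textbf{What it costs.} Working in the Frobenius norm introduces a factor $\sqrt N$. The constant $K=4\sqrt N$ does satisfy the lemma as stated. In the paper's own setting ($N=100$, $\gamma_{\mathrm{mix}}=0.15$), however, it gives $K\gamma_{\mathrm{mix}}=6$, which is weaker than the trivial bound $\|P_{\mathcal V^\perp}Bv\|\le\|v\|$. The paper's perturbative constant, by contrast, is dimension-free.

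\textbf{Getting both.} The polar factor is also a nearest orthogonal matrix in the spectral norm. Writing $\widetilde B=BJ$, one has $\|\widetilde B-B\|_2=\|J-I\|_2=\max_i|\sigma_i(\widetilde B)-1|$. By Weyl's inequality for singular values, $\|\widetilde B-Q\|_2\ge\max_i|\sigma_i(\widetilde B)-1|$ for every $Q\in O(N)$. Your comparison then yields $\|B-B_{\mathrm{align}}\|_2\le 2\|\widetilde B-B_{\mathrm{align}}\|_2\le 4\gamma_{\mathrm{mix}}$. That is, $K=4$ works uniformly for all $\gamma_{\mathrm{mix}}\in[0,1]$, and no Neumann series or case split is needed.
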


\begin{proof}
Let $v\in\mathcal V$. Since $B_{\mathrm{align}}\mathcal V\subseteq\mathcal V$, 
$P_{\mathcal V^\perp}B_{\mathrm{align}}v=0$, hence
\[
P_{\mathcal V^\perp}\widetilde B v
=\gamma_{\mathrm{mix}}P_{\mathcal V^\perp}B_{\mathrm{rand}}v,
\quad
\|P_{\mathcal V^\perp}\widetilde B v\|
\le \gamma_{\mathrm{mix}}\|v\|.
\tag{1}
\]
Write the polar decomposition $\widetilde B=BJ$ with 
$J=(\widetilde B^\top\widetilde B)^{1/2}$. Since $B_{\mathrm{align}}$ and 
$B_{\mathrm{rand}}$ are orthogonal,
\[
\widetilde B
= B_{\mathrm{align}}
+\gamma_{\mathrm{mix}}(B_{\mathrm{rand}}-B_{\mathrm{align}}),
\]
so $\|\widetilde B-B_{\mathrm{align}}\|\le 2\gamma_{\mathrm{mix}}$, hence
$\|\widetilde B^\top\widetilde B-I\|=O(\gamma_{\mathrm{mix}})$ and 
$\|J-I\|=O(\gamma_{\mathrm{mix}})$. Since $B=\widetilde B J^{-1}$ and 
$J^{-1}=I+O(\gamma_{\mathrm{mix}})$,
\[
\|B-\widetilde B\|=O(\gamma_{\mathrm{mix}}).
\tag{2}
\]
Combining (1)–(2),
\[
\|P_{\mathcal V^\perp}Bv\|
\le
\|P_{\mathcal V^\perp}\widetilde Bv\|
+
\|(B-\widetilde B)v\|
\le
K\gamma_{\mathrm{mix}}\|v\|.
\qedhere\]
\end{proof}

Theorem~\ref{thm:subspace_ridge_vector} below is an extension of Theorem~\ref{thm:subspace_ridge_scalar} to a vector target while also taking into account the leakage $\gamma$. The reader should note that Theorem~\ref{thm:subspace_ridge_vector} does not imply that increasing $\gamma$ improves prediction performance. While smaller $\gamma$ improves geometric alignment by reducing covariance leakage, excessively small $\gamma$ may reduce the effective embedding dimension of the reservoir. Here the ridge prediction score reflects a tradeoff between covariance concentration and high-dimensional memory representation.

\begin{theorem}[Subspace invariance and ridge training error, vector case]
\label{thm:subspace_ridge_vector}
Consider the linear reservoir
$
x_{n+1}=A u_n+\beta B x_n$, $0<\beta<1$, with $B$ orthogonal and
centered inputs $u_n\in\mathbb{R}^p$. Define
\[
\Sigma=\tfrac1T\sum_{n=1}^T x_n x_n^\top, 
\quad
C=\tfrac1T\sum_{n=1}^T x_n u_n^\top, \] 
and let $\mathcal V\subset\mathbb{R}^N$ be the increment-spine which is $\operatorname{span}\{\Delta v_n \}$ with $v_n = A u_n$.  
Assume approximate invariance
\[
\|P_{\mathcal V^\perp}B v\|\le \gamma \|v\|,
\qquad v\in\mathcal V,
\]
where $\gamma=K\gamma_{\mathrm{mix}}$. Then:
\begin{description}
\item[(i)] {\bf Exact invariance.} If $\gamma=0$, then $C\in\mathcal V$ (meaning that the range of $C$ is in $\mathcal V$). 

\item[(ii)] {\bf Controlled covariance leakage:} There exists $K_1>0$ independent of $T$ such that
\[
\|C_{\mathcal V^\perp}\|_F \le K_1 \gamma,
\qquad
C_{\mathcal V^\perp}:=P_{\mathcal V^\perp}C.
\]

\item[(iii)] {\bf Ridge identity.} For $\lambda>0$, the ridge minimizer 
$W_\lambda=(\Sigma+\lambda I)^{-1}C$ satisfies
\[
\min_W\Big(
\tfrac1T\sum_{n=1}^T \|W^\top x_n-u_n\|^2
+\lambda\|W\|_F^2
\Big)
=
\tfrac1T\sum_{n=1}^T \|u_n\|^2
-
\mathcal G_\lambda,
\]
where
$
\mathcal G_\lambda
=\operatorname{tr}\!\big(C^\top(\Sigma+\lambda I)^{-1}C\big)$ is the ridge prediction score.

\item[(iv)] {\bf Dominant variance contribution.} Let $\Sigma=\sum_i\lambda_i q_i q_i^\top$, and assume
\[
\|P_{\mathcal V^\perp}q_i\|
\le K_2\gamma
\quad \text{for all }\lambda_i\ge\lambda_0.
\]
Then, the ridge prediction score is such that 
\[
\mathcal G_\lambda
=
\sum_{\lambda_i\ge\lambda_0}
\frac{\|q_{i,\mathcal V}^\top C_{\mathcal V}\|_2^2}{\lambda_i+\lambda}
+
O(\gamma^2),
\]
where $q_{i,\mathcal V}=P_{\mathcal V}q_i$ and
$C_{\mathcal V}=P_{\mathcal V}C$. 
\end{description} 
\end{theorem}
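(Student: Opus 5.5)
I will treat the four items separately, because (iii) is pure algebra and (i), (ii), (iv) all come from one transverse estimate along the unrolled recursion.

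\emph{Transverse estimate.} After washout, unrolling as in the proof of Theorem~\ref{thm:subspace_ridge_scalar} gives
\[
x_n=\sum_{k\ge0}\beta^kB^kv_{n-1-k}.
\]
The forcing $v_m$ lies in $\mathcal V$; I take $\mathcal V$ to contain the span of the $v_m$, which agrees with the span of the $\Delta v_m$ up to the centering convention, as the paper implicitly assumes.

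Write $B_{\mathcal V}:=P_{\mathcal V}BP_{\mathcal V}$ and $L:=P_{\mathcal V^\perp}BP_{\mathcal V}$, so that $\|L\|\le\gamma$. Since $B$ is orthogonal, every power satisfies $\|B^j\|=1$. Then for $v\in\mathcal V$ the telescoping identity
\[
B^kv-B_{\mathcal V}^kv=\sum_{j=0}^{k-1}B^{k-1-j}(B-B_{\mathcal V})B_{\mathcal V}^jv
\]
holds, and $(B-B_{\mathcal V})B_{\mathcal V}^jv=LB_{\mathcal V}^jv$. This gives $\|P_{\mathcal V^\perp}B^kv\|\le k\gamma\|v\|$. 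Summing the series yields
\[
\|P_{\mathcal V^\perp}x_n\|\le \gamma\, M_v\sum_k k\beta^k=\gamma M_v\,\frac{\beta}{(1-\beta)^2},
\]
where $M_v:=\sup_m\|v_m\|\le\|A\|\sup_m\|u_m\|$ is independent of $T$ by stationarity and boundedness of the input.

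\emph{Item (i).} When $\gamma=0$ the estimate gives $x_n\in\mathcal V$ for every $n$, so each column of $C$, being an average of $x_n$ weighted by $u_n$, lies in $\mathcal V$.

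\emph{Item (ii).} I apply Theorem~\ref{thm:approx_covleak} with
\[
M_\perp:=\gamma M_v\beta/(1-\beta)^2 .
\]
Equivalently, Cauchy--Schwarz in time gives $\|C_{\mathcal V^\perp}\|_F\le M_\perp\sigma_u$. So (ii) holds with $K_1:=M_v\sigma_u\beta/(1-\beta)^2$, where $\sigma_u$ is bounded uniformly in $T$.

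\emph{Item (iii).} I repeat the completing-the-square computation of Theorem~\ref{thm:subspace_ridge_scalar} with the trace inner product. The loss is
\[
\operatorname{tr}(W^\top(\Sigma+\lambda I)W)-2\operatorname{tr}(W^\top C)+\tfrac1T\sum\|u_n\|^2 .
\]
It is minimized at $W_\lambda$, where it takes the value $\tfrac1T\sum\|u_n\|^2-\operatorname{tr}(C^\top(\Sigma+\lambda I)^{-1}C)$.

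\emph{Item (iv): structure.} I use the eigen-expansion
\[
\mathcal G_\lambda=\sum_i\|q_i^\top C\|^2/(\lambda_i+\lambda),
\]
and split it into the terms with $\lambda_i\ge\lambda_0$ and the rest. For the dominant terms, the cross-term cancellation from the proof of Theorem~\ref{thm:novanish_vector_robust} gives
\[
q_i^\top C=q_{i,\mathcal V}^\top C_{\mathcal V}+q_{i,\mathcal V^\perp}^\top C_{\mathcal V^\perp},
\]
and the second piece has norm at most $K_2\gamma\cdot K_1\gamma=O(\gamma^2)$. Squaring, the error is $2\|q_{i,\mathcal V}^\top C_{\mathcal V}\|\cdot O(\gamma^2)+O(\gamma^4)$. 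This is $O(\gamma^2)$ after dividing by $\lambda_i+\lambda\ge\lambda$, with at most $N$ terms and $\|C\|$ bounded.

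\emph{Item (iv): the tail, which is the main obstacle.} The terms with $\lambda_i<\lambda_0$ must also be $O(\gamma^2)$, and the hypothesis only constrains the dominant eigenvectors. My plan is as follows.
\begin{itemize}
\item Invariant subspaces of the symmetric $\Sigma$ are orthogonal, so the span of the tail eigenvectors is the orthogonal complement $\mathcal Q^\perp$ of the dominant span $\mathcal Q$.
\item Since each dominant $q_i$ is $K_2\gamma$-close to $\mathcal V$, a $\sin\Theta$-type argument shows that $\mathcal Q^\perp$ is $O(\gamma)$-close to $\mathcal V^\perp$ on $\mathcal V$, which gives $\|P_{\mathcal Q^\perp}P_{\mathcal V}\|=O(\gamma)$.
\item One needs $\dim\mathcal Q=\dim\mathcal V$ here; otherwise $\mathcal V$ contains a direction orthogonal to $\mathcal Q$ and the bound fails. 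I would add this as an explicit assumption if it is not implied.
\item Then the tail contribution is at most $\lambda^{-1}\|P_{\mathcal Q^\perp}C\|_F^2\le\lambda^{-1}(\|P_{\mathcal Q^\perp}P_{\mathcal V}\|\|C_{\mathcal V}\|+\|C_{\mathcal V^\perp}\|)^2=O(\gamma^2)$, using (ii).
\end{itemize}
If the dimension-matching condition cannot be secured, I would weaken (iv). The fallback is to state the tail term as an explicit remainder $\sum_{\lambda_i<\lambda_0}\|q_i^\top C\|^2/(\lambda_i+\lambda)$ rather than as $O(\gamma^2)$.
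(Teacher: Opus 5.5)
Your plan has the same skeleton as the paper's proof: unroll the recursion for (i)--(ii), complete the square for (iii), and use the cancellation $q_i^\top C=q_{i,\mathcal V}^\top C_{\mathcal V}+q_{i,\mathcal V^\perp}^\top C_{\mathcal V^\perp}$ for the dominant terms in (iv). The assumption $v_m\in\mathcal V$ that you state explicitly is the one the paper uses tacitly in (i). Where you deviate, you are more rigorous than the paper, and that extra care is needed.

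In (ii), the paper only says that each application of $B$ ``injects at most $\gamma$'' and then sums a geometric series. Your telescoping bound $\|P_{\mathcal V^\perp}B^kv\|\le k\gamma\|v\|$ captures an accumulation that really occurs: a planar rotation by $\phi$ with $\sin\phi=\gamma$ and $\mathcal V=\operatorname{span}\{e_1\}$ gives $|\sin k\phi|\approx k\gamma$. So your constant $\beta/(1-\beta)^2$ is the correct order, and the bound $\beta\gamma M_v/(1-\beta)$ in the remark after the theorem does not follow and fails for that example. You also keep the factor $\sigma_u$ that the paper drops when it passes from $\|P_{\mathcal V^\perp}x_n\|$ to $\|C_{\mathcal V^\perp}\|_F$.

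In (iv), the paper's proof ends with ``substituting'' and simply discards the tail $\sum_{\lambda_i<\lambda_0}\|q_i^\top C\|^2/(\lambda_i+\lambda)$. Your worry about this is justified. For $\lambda_0>\lambda_1$ the hypothesis is vacuous and the sum is empty, so (iv) would force $\mathcal G_\lambda=O(\gamma^2)$. At $\gamma=0$ that means $\mathcal G_\lambda=0$, which is impossible whenever $C\neq0$. Without an extra condition, (iv) as stated is false.

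Your repair is correct. For equal-dimensional subspaces, $\|P_{\mathcal Q^\perp}P_{\mathcal V}\|=\|P_{\mathcal V^\perp}P_{\mathcal Q}\|\le\sqrt d\,K_2\gamma$; this symmetry of principal angles is all the ``$\sin\Theta$'' step needs. That bounds the tail by $\lambda^{-1}\|P_{\mathcal Q^\perp}C\|_F^2=O(\gamma^2)$.

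The added assumption can be stated more economically. Once $(d+1)K_2^2\gamma^2<1$, no $d+1$ orthonormal vectors can all be $K_2\gamma$-close to the $d$-dimensional $\mathcal V$. Hence $\dim\mathcal Q\le d$ is automatic, and you only need the $d$-th largest eigenvalue to satisfy $\lambda_d\ge\lambda_0$. Your fallback, keeping the tail as an explicit remainder, is exactly what the paper's argument actually establishes.
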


\begin{proof}
By Lemma~\ref{lem:mixing-leakage}, the mixing construction
\[
B=\operatorname{polar}\!\big((1-\gamma_{\mathrm{mix}})B_{\mathrm{align}}
+\gamma_{\mathrm{mix}}B_{\mathrm{rand}}\big)
\]
implies
\[
\|P_{\mathcal V^\perp}B v\|
\le K\gamma_{\mathrm{mix}}\|v\|
\quad \text{for all } v\in\mathcal V.
\]
Set $\gamma:=K\gamma_{\mathrm{mix}}$. We now analyze the four claims in the statement.

\medskip

\noindent\textbf{(i)}
If $\gamma=0$, then $B\mathcal V\subseteq\mathcal V$. Since
$x_{n+1}=A u_n+\beta B x_n$ and $A u_n\in\mathcal V$, induction gives
$x_n\in\mathcal V$ (after transients), hence $C=\tfrac1T\sum_{n=1}^T x_n u_n^\top\in\mathcal V.$

\noindent\textbf{(ii)}
Using $x_n=\sum_{k\ge0}\beta^k B^k A u_{n-k},$
each application of $B$ injects at most $\gamma$ transverse variance, so a geometric series bound yields
\[
\|P_{\mathcal V^\perp}x_n\|\le K_1\gamma,
\]
with $K_1$ independent of $T$. Thus
\[
\|C_{\mathcal V^\perp}\|_F
=\left\|\tfrac1T\sum_{n=1}^T P_{\mathcal V^\perp}x_n\,u_n^\top\right\|_F
\le K_1\gamma.
\]

\noindent\textbf{(iii) }
Expanding,
\[
J_\lambda(W)
=\operatorname{tr}\!\big(W^\top(\Sigma+\lambda I)W\big)
-2\operatorname{tr}(W^\top C)
+\tfrac1T\sum\|u_n\|^2.
\]
Completing the square at $W_\lambda=(\Sigma+\lambda I)^{-1}C$ gives
\[
\min_W J_\lambda(W)
=\tfrac1T\sum\|u_n\|^2
-\operatorname{tr}\!\big(C^\top(\Sigma+\lambda I)^{-1}C\big).
\]

\noindent\textbf{(iv) }
Write
\[
C=C_{\mathcal V}+C_{\mathcal V^\perp},
\qquad
q_i=q_{i,\mathcal V}+q_{i,\perp}.
\]
Since the columns of $C_{\mathcal V}$ lie in $\mathcal V$ and
$q_{i,\perp}\in\mathcal V^\perp$, we have
$q_{i,\perp}^\top C_{\mathcal V}=0$.
Likewise, since the columns of $C_{\mathcal V^\perp}$ lie in $\mathcal V^\perp$
and $q_{i,\mathcal V}\in\mathcal V$, we have $q_{i,\mathcal V}^\top C_{\mathcal V^\perp}=0$.
Hence $q_i^\top C
=
q_{i,\mathcal V}^\top C_{\mathcal V}
+
q_{i,\perp}^\top C_{\mathcal V^\perp}$. From {\bf (ii)}, $\|C_{\mathcal V^\perp}\|_2\le K_1\gamma$, and by hypothesis
$\|q_{i,\perp}\|_2\le K_2\gamma$. Therefore
\[
\|q_i^\top C-q_{i,\mathcal V}^\top C_{\mathcal V}\|_2
=
\|q_{i,\perp}^\top C_{\mathcal V^\perp}\|_2
\le
\|q_{i,\perp}\|_2\,\|C_{\mathcal V^\perp}\|_2
\le
K_1K_2\,\gamma^2 .
\]
It follows that
\[
\|q_i^\top C\|_2^2
=
\|q_{i,\mathcal V}^\top C_{\mathcal V}\|_2^2
+
O(\gamma^2).
\]
Substituting, the ridge prediction score satisfies $\mathcal G_\lambda
=
\sum_{\lambda_i\ge\lambda_0}
\frac{\|q_{i,\mathcal V}^\top C_{\mathcal V}\|_2^2}{\lambda_i+\lambda}
+
O(\gamma^2).$
\end{proof}

\begin{remark}
\normalfont
For a linear reservoir with the ESP, $x_{n}= \sum_{k\ge 0} \beta^k B^k v_{n-k}$, where $v_n=Au_n$ and let $\mathcal V=\operatorname{span}\{\Delta v_n\}$ with $v_n\in\mathcal V$ and $\sup_n\|v_n\|\le M_v<\infty$.  Since $B$ satisfies the design condition $\|P_{\mathcal V^\perp}Bv\|\le\gamma\|v\|$ for all $v\in\mathcal V$, then
$
\|P_{\mathcal V^\perp}x_n\| \le \frac{\beta\gamma\,M_v}{1-\beta}=:M_\perp$, for all $n$.
In particular, $M_\perp\to 0$ as $\gamma\to 0$, providing hypotheses {\bf (ii)} in Theorems \ref{thm:approx_covleak_scalar} and \ref{thm:approx_covleak} with explicit dependence on the design parameter. Later on, when we consider a $\tanh$-ESN (see Corollary~\ref{cor:tanh_training}), these hypotheses become more permissive than in the linear reservoir case. \end{remark}

\begin{remark}[Heuristic near-cone invariance and the role of the input]
\normalfont
Consider the increment dynamics $\Delta x_{n+1}=v_n+\beta B\Delta x_n$. If $B$ approximately preserves $\mathcal V$ (that is, induces only small transverse leakage), then the loss of cone invariance is primarily due to insufficient alignment of $v_n$ with $\mathcal V$. Heuristically, if $\|v_n\|\gtrsim \mu\,\beta\,\|P\Delta x_n\|$, the longitudinal component dominates the leakage, yielding approximate forward invariance of a cone around $\mathcal V$ and keeping increments concentrated near $\mathcal V$. A rigorous treatment of this observation requires additional assumptions on $B$ and $v_n$, which we do not pursue here. Instead, we design $B$ to nearly preserve $\mathcal V$, which suffices for the training error reduction observed in this work. A systematic study of conditions such as $\|v_n\|\gtrsim \mu\,\beta\,\|P\Delta x_n\|$ and their role in ensuring near-cone invariance is left for future work.
\end{remark}

\section{Extension to general reservoirs} 
\label{sec: general reservoirs}

\begin{theorem}[Cone concentration yields non-vanishing ridge gain]
\label{thm:cone_ridge_general}
Assume the hypotheses of Theorems~\ref{thm:approx_covleak}. Let $\epsilon >0  $ and $\lambda_0 >0 $  be such that for all the eigenvectors $\{q_i:\lambda_i\ge\lambda_0\}$ one has that
$\|P_{\mathcal V^\perp}q_i\|\le\varepsilon$. Let
$\rho:=\sigma_{\min}(C_{\mathcal V})>0$ and
$\delta:=\|P_{\mathcal V^\perp}C\|$. Then, the ridge prediction score
$\mathcal G_\lambda=\operatorname{tr}\!\big(C^\top(\Sigma+\lambda I)^{-1}C\big)$ satisfies 
\[
\mathcal G_\lambda\ge
\sum_{\lambda_i\ge\lambda_0}
\frac{(\sqrt{1-\varepsilon^2}\rho-\varepsilon\delta)^2}{\lambda_i+\lambda}.
\]
In particular, if $\sqrt{1-\varepsilon^2}\rho>\varepsilon\delta$, then $\mathcal G_\lambda$ is bounded away from zero, and the ridge training error is strictly reduced by a non-vanishing amount.
\end{theorem}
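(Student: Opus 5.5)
The plan is to expand $\mathcal G_\lambda$ spectrally and then bound each dominant term from below using Theorem~\ref{thm:novanish_vector_robust}.

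\textbf{Spectral expansion.} Write $\Sigma=\sum_i\lambda_i q_iq_i^\top$ with $\{q_i\}$ orthonormal. Then $(\Sigma+\lambda I)^{-1}=\sum_i(\lambda_i+\lambda)^{-1}q_iq_i^\top$, and by cyclicity of the trace
\[
\mathcal G_\lambda=\operatorname{tr}\!\big(C^\top(\Sigma+\lambda I)^{-1}C\big)=\sum_{i=1}^N\frac{\|q_i^\top C\|_2^2}{\lambda_i+\lambda}.
\]
Every summand is nonnegative, since $\lambda_i\ge0$ and $\lambda>0$. Discarding the terms with $\lambda_i<\lambda_0$ therefore gives
\[
\mathcal G_\lambda\ge\sum_{\lambda_i\ge\lambda_0}\frac{\|q_i^\top C\|_2^2}{\lambda_i+\lambda}.
\]

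\textbf{Lower bound on each dominant term.} For each index with $\lambda_i\ge\lambda_0$, the hypothesis $\|P_{\mathcal V^\perp}q_i\|\le\varepsilon$ is exactly (A1) of Theorem~\ref{thm:novanish_vector_robust}. Assumption (A2) is $\rho>0$. For (A3) we only need $\delta$ to be a finite number, because the inequality in that theorem holds for any value of $\delta$. If one wants $\delta$ quantitatively small, Theorem~\ref{thm:approx_covleak} supplies $\delta\le\|P_{\mathcal V^\perp}C\|_F\le M_\perp\sigma_y$; this is where the hypotheses of Theorem~\ref{thm:approx_covleak} enter. Applying Theorem~\ref{thm:novanish_vector_robust} yields
\[
\|q_i^\top C\|\ge\sqrt{1-\varepsilon^2}\,\rho-\varepsilon\delta.
\]

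\textbf{Squaring and conclusion.} This step is the main (small) obstacle, because squaring preserves the inequality only when the right-hand side is nonnegative. In the case $\sqrt{1-\varepsilon^2}\rho\ge\varepsilon\delta$ we square directly and sum to obtain the stated bound. In the opposite case the bound is not valid as literally written, since the squared negative quantity could exceed $\|q_i^\top C\|^2$. I would therefore state the result with $\big((\sqrt{1-\varepsilon^2}\rho-\varepsilon\delta)_+\big)^2$ in the numerator, or simply assume $\sqrt{1-\varepsilon^2}\rho\ge\varepsilon\delta$ as in the ``in particular'' clause.

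For the final assertion, assume $\sqrt{1-\varepsilon^2}\rho>\varepsilon\delta$ and suppose the index set $\{\lambda_i\ge\lambda_0\}$ is nonempty. Each term is then at least $(\sqrt{1-\varepsilon^2}\rho-\varepsilon\delta)^2/(\lambda_{\max}+\lambda)$, which is strictly positive, where $\lambda_{\max}\le\operatorname{tr}\Sigma$. Hence $\mathcal G_\lambda$ is bounded away from zero. By the ridge identity (Theorem~\ref{thm:subspace_ridge_vector}(iii), whose completing-the-square proof holds for general targets $y_n$), the training error equals $\frac1T\sum\|y_n\|^2-\mathcal G_\lambda$. It is therefore strictly reduced, relative to the trivial predictor $W=0$, by this non-vanishing amount.
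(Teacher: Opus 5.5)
Your proof is correct and follows the paper's own argument: expand $\mathcal G_\lambda=\sum_i\|q_i^\top C\|^2/(\lambda_i+\lambda)$, apply Theorem~\ref{thm:novanish_vector_robust} to each index with $\lambda_i\ge\lambda_0$, square, sum, and invoke the ridge identity. Your version is more careful than the paper's, which drops the non-dominant terms without noting that they are nonnegative and squares the lower bound without checking its sign. Your positive-part correction is therefore a genuine fix rather than a pedantic one, because the displayed bound can indeed fail when $\sqrt{1-\varepsilon^2}\rho<\varepsilon\delta$, for instance when several dominant eigenvectors have small overlap with $C$.
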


\begin{proof}
Using the ridge identity $E_\lambda=\tfrac1T\sum\|y_n\|^2-\mathcal G_\lambda,
\qquad
\mathcal G_\lambda=\operatorname{tr}\!\big(C^\top(\Sigma+\lambda I)^{-1}C\big),$ let $\Sigma=\sum_i\lambda_i q_iq_i^\top$. Then
\[
\mathcal G_\lambda=\sum_i\frac{\|q_i^\top C\|^2}{\lambda_i+\lambda}.
\]
By Theorem~\ref{thm:novanish_vector_robust},
\[
\|q_i^\top C\|\ge\sqrt{1-\varepsilon^2}\rho-\varepsilon\delta
\quad \text{for all } \lambda_i\ge\lambda_0.
\]
Therefore
\[
\mathcal G_\lambda\ge
\sum_{\lambda_i\ge\lambda_0}
\frac{(\sqrt{1-\varepsilon^2}\rho-\varepsilon\delta)^2}{\lambda_i+\lambda}.
\]
If $\sqrt{1-\varepsilon^2}\rho>\varepsilon\delta$, then each term in the sum is strictly positive, so $\mathcal G_\lambda$ admits a strictly positive lower bound and is bounded away from zero. Substituting into the ridge identity yields a non-vanishing reduction of the training error.
\end{proof}

The preceding theorem is purely geometric and does not depend on a specific reservoir dynamics. We now show, via a corollary, that common nonlinear reservoirs satisfy the required cone concentration property.

\begin{corollary}[Echo state networks with feature forcing]
\label{cor:tanh_training}
Consider the reservoir
\begin{equation*}
x_{n+1}=\tanh(A\phi(u_n)+Bx_n),
\qquad
v_n=A(\phi(u_{n+1})-\phi(u_n)),
\end{equation*}
and define $\mathcal V=\operatorname{span}\{v_n\}$.
If
\[
\|P_{\mathcal V^\perp}Bv\|\le\gamma\|v\| \ (v\in\mathcal V),\qquad
\|P_{\mathcal V^\perp}Bw\|\le\kappa\|w\| \ (w\in\mathcal V^\perp),
\]
with $\kappa<1$, then the increments $\Delta x_n$
remain concentrated in a cone around $\mathcal V$.
Consequently
Theorems~\ref{thm:approx_covleak},
\ref{thm:novanish_vector_robust}, and
\ref{thm:cone_ridge_general} apply, implying reduced ridge training error
whenever $\gamma$ is small.
\end{corollary}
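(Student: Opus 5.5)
The strategy is to reduce the nonlinear increment dynamics to a perturbed version of the linear increment recursion $\Delta x_{n+1}=v_n+\beta B\Delta x_n$ from Section~\ref{Construction of the linear reservoir}, using the mean value theorem for $\tanh$. Write $z_n:=A\phi(u_n)+Bx_n$, so that $x_{n+1}=\tanh(z_n)$ and
\[
\Delta x_{n+1}=x_{n+2}-x_{n+1}=D_n\,(z_{n+1}-z_n)=D_n\big(v_n+B\Delta x_n\big),
\]
where $D_n=\operatorname{diag}(\operatorname{sech}^2(\xi_{n,j}))$ and each $\xi_{n,j}$ lies between the $j$-th components of $z_n$ and $z_{n+1}$. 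Thus $0\preceq D_n\preceq I$ and $\|D_n\|\le1$. This is the componentwise Lipschitz structure alluded to after Theorem~\ref{thm:subspace_ridge_scalar}.

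The key step is a transverse recursion. Decompose $\Delta x_n=a_n+b_n$ with $a_n=P_{\mathcal V}\Delta x_n$ and $b_n=P_{\mathcal V^\perp}\Delta x_n$. Since $v_n\in\mathcal V$, the hypotheses on $B$ give
\[
\|P_{\mathcal V^\perp}(v_n+B\Delta x_n)\|\le \gamma\|a_n\|+\kappa\|b_n\|.
\]
The difficulty is that $D_n$ does not commute with $P_{\mathcal V}$, so applying $D_n$ can rotate longitudinal mass into $\mathcal V^\perp$. I would handle this by writing $D_n=\bar d_n I+E_n$, where $\bar d_n$ is a scalar (for instance the midpoint of the diagonal range). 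Then
\[
\|b_{n+1}\|\le \bar d_n(\gamma\|a_n\|+\kappa\|b_n\|)+\|E_n\|\,\|v_n+B\Delta x_n\|.
\]
Here $\|E_n\|$ is controlled by the spread of $\operatorname{sech}^2$ over the state cloud, which is small when the reservoir operates near its linear regime or in any uniformly saturated regime. Iterating with $\kappa<1$ yields a geometric-series bound
\[
\|b_n\|\le \frac{\gamma\sup_k\|a_k\|+\eta\,M_\Delta}{1-\kappa}+\kappa^n\|b_0\|,
\qquad \eta:=\sup_n\|E_n\|.
\]
After washout, this gives the uniform transverse boundedness needed for hypothesis (ii) of Theorem~\ref{thm:approx_covleak}. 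The same argument applied to $x_n$ itself, using $\tanh(0)=0$ and the analogous mean-value factorization, gives $\|P_{\mathcal V^\perp}x_n\|\le M_\perp=O(\gamma+\eta)$.

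To obtain the cone condition (i), I still need a lower bound on $\|a_{n+1}\|$. Since $P_{\mathcal V}(v_n+B\Delta x_n)$ contains $v_n$ and every diagonal entry of $D_n$ is at least $d_{\min}>0$ on the bounded state region, I expect $\|a_{n+1}\|\gtrsim d_{\min}\|v_n\|-(\text{transverse and }B\text{-rotated terms})$ on the set $\mathcal G$ of times where $\|v_n\|$ is not small. Outside $\mathcal G$ I would allow a fraction $\alpha$ of exceptional times. This produces
\[
\tan\theta\lesssim \frac{\gamma+\kappa+\eta}{d_{\min}},
\]
up to forcing-dependent constants. With the cone and transverse bounds in place, Theorems~\ref{thm:approx_covleak}, \ref{thm:novanish_vector_robust}, and~\ref{thm:cone_ridge_general} apply verbatim, since they are model-independent.

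The main obstacle is the non-commutativity of $D_n$ with $P_{\mathcal V}$ in the transverse recursion. The statement as given assumes no quantitative control of it, so I would have to add a mild quantitative hypothesis such as small $\eta$ and keep it explicit in the bound. The lower bound on the longitudinal component is the secondary difficulty, and it is where the ``most $n$'' set $\mathcal G$ enters.
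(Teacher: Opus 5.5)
The paper gives no proof of this corollary. Its only support is the claim in the introduction that componentwise Lipschitz nonlinearities ``cannot create large transverse components''; this implicitly uses $\|P_{\mathcal V^\perp}D_n w\|\le\|P_{\mathcal V^\perp}w\|$. You are right that this fails because $D_n$ does not commute with $P_{\mathcal V}$. In fact the corollary does not follow from its stated hypotheses. Take $N=2$, $\phi=\mathrm{id}$, $A=(1,3)^\top$ and $B=0$, so $\gamma=\kappa=0$. For $u_n=\tfrac12$, $u_{n+1}=1$ the increment $\tanh(Au_{n+1})-\tanh(Au_n)\approx(0.299,\,0.090)^\top$ has longitudinal part $\approx0.18$ and transverse part $\approx0.26$ relative to $\mathcal V=\operatorname{span}\{(1,3)^\top\}$, an aperture of about $55^\circ$.

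So a hypothesis controlling the spread of $D_n$ is necessary, not a convenience. Your splitting $D_n=\bar d_nI+E_n$ with the geometric-series bound on $b_n$ is the right repair, subject to three corrections:
\begin{itemize}
\item The $E_n$-term multiplies $\|z_{n+1}-z_n\|\le\sup_k\|v_k\|+\|B\|M_\Delta$, not $M_\Delta$; note that $\|z_{n+1}-z_n\|\ge\|\Delta x_{n+1}\|$.
\item Since you divide by $d_{\min}$, what must be small is the relative spread $\eta/d_{\min}$. This holds in the near-linear regime but not under saturation, where $\operatorname{sech}^2$ varies by orders of magnitude across components.
\item The bound $\|P_{\mathcal V^\perp}x_n\|=O(\gamma+\eta)$ needs $A\phi(u_n)\in\mathcal V$. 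This fails, for example, if $\phi$ contains a constant (bias) feature, because $\mathcal V$ only sees increments. Otherwise $M_\perp$ picks up $\sup_n\|P_{\mathcal V^\perp}A\phi(u_n)\|/(1-\kappa)$.
\end{itemize}

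The genuine gap is the lower bound on the longitudinal component needed for cone condition (i). Since $P_{\mathcal V}(z_{n+1}-z_n)=v_n+P_{\mathcal V}Ba_n+P_{\mathcal V}Bb_n$, the feedback term $P_{\mathcal V}Ba_n$ is of the same size as $v_n$ and can cancel it. Already for the linear reservoir with $d=1$, $a_{n+1}\approx\sum_k\beta^k(\pm1)^k v_{n-k}$ is a filtered forcing that changes sign as the input oscillates. Hence $\|a_{n+1}\|\gtrsim d_{\min}\|v_n\|$ does not follow, and the exceptional set cannot be defined by smallness of $\|v_n\|$.

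What you can do is set $\mathcal G:=\{n:\|P_{\mathcal V}\Delta x_n\|\ge m\}$ and \emph{assume} $|\mathcal G|\ge(1-\alpha)T$. This is exactly the ingredient the paper's ``heuristic near-cone invariance'' remark concedes it cannot supply. On $\mathcal G$, after washout, your transverse bound then gives $\tan\theta\le(\gamma\sup_k\|a_k\|+\eta\sup_k\|z_{k+1}-z_k\|)/((1-\kappa)m)$. The additive $\kappa$ in your aperture estimate is an artifact of a one-step bound: your own recursion gives $\|b_n\|=O((\gamma+\eta)/(1-\kappa))$, so $\kappa$ should enter only through $1/(1-\kappa)$. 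Otherwise the aperture would stay of order one even at $\gamma=\eta=0$.

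Finally, saying the theorems apply ``verbatim'' skips hypothesis (A1) of Theorem~\ref{thm:novanish_vector_robust}. It does follow from your transverse bound: $\lambda_i\|P_{\mathcal V^\perp}q_i\|=\|\frac1T\sum_n P_{\mathcal V^\perp}x_n\,(x_n^\top q_i)\|\le M_\perp\sqrt{\lambda_i}$, so $\varepsilon=M_\perp/\sqrt{\lambda_0}$ works. Hypothesis (A2) remains a separate assumption.
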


\paragraph{Interpretation.}
Theorem~\ref{thm:novanish_vector_robust} shows that robust alignment of
dominant covariance directions follows from two geometric properties:
(i) dominant covariance eigenvectors remain close to the increment
subspace $\mathcal V$, and (ii) covariance leakage outside $\mathcal V$
is small.  For nonlinear reservoirs it therefore suffices to verify that
state increments remain concentrated in a cone around $\mathcal V$,
since the covariance geometry is induced by these increments.

For the reservoir matrix constructed in
Section~\ref{subsec:lift}, Lemma~\ref{lem:mixing-leakage} yields $\|P_{\mathcal V^\perp}Bv\|
\le K\,\gamma_{\mathrm{mix}}\|v\|$, when $ v\in\mathcal V$. Hence decreasing the mixing parameter $\gamma_{\mathrm{mix}}$
reduces transverse leakage and sharpens the cone concentration,
thereby improving the alignment between the covariance geometry
and the supervised signal.

\section{DAR implementation and numerical results } \label{sec: implementation}

We generate a scalar input signal using the Lorenz system
\[
\dot x = \sigma (y-x), \qquad
\dot y = x(\rho-z)-y, \qquad
\dot z = xy-\beta_{\mathrm{Lor}} z,
\]
with parameters $\sigma = 10, \qquad \rho = 28, \qquad \beta_{\mathrm{Lor}} = \tfrac{8}{3}$ (see the Supplement for inputs from other systems, namely the Mackey-Glass and the full logistic maps).
The system is discretized using the classical fourth-order Runge--Kutta method with time step $\Delta t = 0.01$, over a trajectory of total length \(T=16000\). The input is the centered first Lorenz coordinate, $u_t = x_t - \frac{1}{T}\sum_{k=1}^T x_k.$

We use reservoirs of two types. On the one hand we consider linear reservoirs ($\phi(u) = u$ in \eqref{linear system with phi}) with a trained neural network readout, feedback parameter ($\beta$ in \eqref{linear system without phi}) set to $\beta=0.9785$, and an input matrix $A \in \mathbb{R}^{N\times 1}$ constructed using independent random entries drawn out of a standard Gaussian distribution. On the other hand, we consider echo state networks as in \eqref{eq:esn_update} of the same dimension, with a linear readout trained using ridge regression.

In the data preparation, a washout period of $1000$ time steps is discarded. The remaining samples $t=1001,\dots,15000$ yield \(14000\) usable data points, which are split into $9800 \text{ training}$,  $2100 \text{ validation}$,  and $2100 \text{ test samples}$. 

To construct the designed reservoir matrix, we define the increment forcing $v_n = A\,(u_{n+1}-u_n)$, and estimate the increment subspace $\mathcal V = \operatorname{span}\{v_n\}$ by principal component analysis (PCA) on the increment cloud \(\{v_n\}\), retaining the smallest number \(d\) of principal directions capturing \(95\%\) of the variance (in the case of a scalar input, this turns out to be only one direction).

The Procrustes alignment is performed on projected increment blocks of depth $K=8$, so that each block contains \(K-1=7\) consecutive increments. A total of \(100\) window centers are used to form the stacked Procrustes matrices to find $R_d$. The resulting subspace operator is lifted to \(\mathbb{R}^N\) and mixed with a random orthogonal matrix using $\gamma_{\mathrm{mix}}=0.15$  before it is projected onto the orthogonal group via the polar decomposition to obtain \(B\in O(N)\) as per the DAR-design.

Two readouts are trained using the same training set of $9450$ samples. The neural-network (NN) readout corresponding to the \emph{linear} dynamics $x_{t+1} = A u_t + \beta B x_t$ is implemented using a
one-hidden-layer network with width \(H=64\) is trained for \(600\) epochs using Adam with learning rate \(3\times 10^{-3}\) and minibatch size \(256\).
The ridge readout is implemented for the echo state network reservoir dynamics $x_{t+1} = \tanh(A u_t + \beta B x_t)$ using a regularization parameter $\lambda_{\mathrm{ridge}} = 10^{-2}$. Thus, both readouts use identical training, validation, and test lengths, but are trained on different state trajectories (linear versus nonlinear reservoir dynamics).  

The forecasting results for a single trial using a reservoir of dimension $N=100$ are shown in panel (a) of Fig.~\ref{fig:dar}, comparing the performance obtained with a random $B$ to that with a designed one. In the top two panels, autonomous forecasts of the first $400$ prediction steps with both the ridge regression and the NN readouts are shown. The reader may note that the mean squared error (MSE) shown in the caption is computed without normalizing the signal amplitudes.  In the bottom panel, the phase portraits along with the Hausdorff distances computed over 2000 time-steps are shown. Fig.~\ref{fig:dar}(b) shows how the cone-aperture is distributed for the same trial, indicating that a designed $B$ keeps the state-increments $\Delta x_n$ closer to the spine $\mathcal V$ more often than a random $B$.  The experiments are repeated $500$ times for reservoir sizes ranging from $N=100$ to $N=1000$ in steps of $100$, where orthonormal matrices are chosen both for a random $B$ and for mixing in a designed $B$,  and the resulting MSE is shown in Fig~\ref{fig:dar}(c).  These experiments illustrate the remarkable reduction in the variability in the performance of the DAR-designed $B$. 

Behind these results, we highlight two distinct but interacting mechanisms governing performance: a \emph{representation problem} and a \emph{dynamical problem}. The representation problem concerns the spectral structure of the state covariance and its alignment with the state--target cross-covariance, as quantified by the ridge decomposition modes $\frac{\langle q_i,c\rangle^2}{\lambda_i+\lambda}$ (see the supplement) whose diffuse contributions across modes indicate spectral pollution. For the Lorenz example, this is usually achieved by both the random and designed reservoirs in this example.  The cone concentration although separately provides a geometric mechanism to reducing covariance leakage and promoting alignment (Theorem~\ref{thm:novanish_vector_robust}).  However, preventing the spectral pollution is not sufficient in general for reliable prediction. Even when the ridge spectrum is typically well-structured for random reservoirs, yet autonomous prediction may still fail -- this is where the structure in the DAR-reservoir $B$ is based on the Procrustes based learning that helps solving the dynamical problem.

We also point out that spectral pollution may persist despite cone concentration even under a designed $B$ due to insufficient observability, for instance in the logistic map (see Supplement). One can experiment to improve observability by trying new features in $\phi$ to reduce the spectral pollution, a topic which needs investigation in a future enterprising work.

\begin{figure}[h]
\label{fig:dar}
\centering
   \includegraphics[width=16cm]{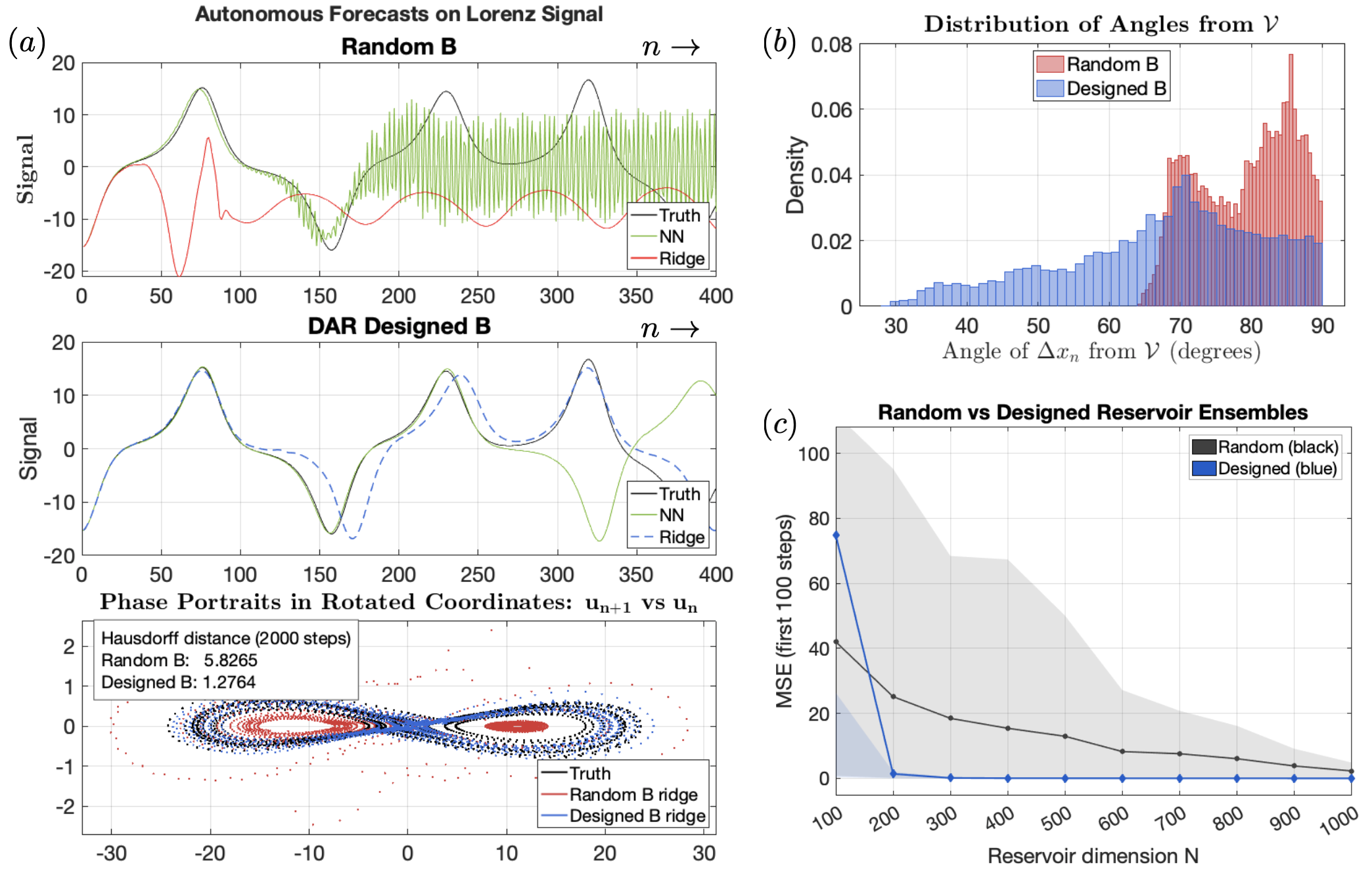}
   \caption{Performance illustration: For the trials in {\bf(a)} and {\bf(b)} reservoirs of size $N=100$ are used. For the DAR designed $B$, MSE in the first $100$ steps: 0.029979 (using neural network readout) and 0.069852 (ridge readout). In {\bf(c)}, shaded bands indicate the 10th--90th percentile range across trials (central $80\%$ variability), while solid lines show the mean error.}
\end{figure}

\section{Discussion and conclusions} \label{sec: discussion}

\subsection{Why a large reservoir dimension $N$ is needed without design?} 

In a linear reservoir of the form \eqref{eq:esn_update} with scalar inputs and whose connectivity matrix $B$ has spectral radius smaller than $1/ \beta$ (see \cite[Proposition 4.2]{RC16}), the solution states admit the explicit representation $ x_n = \sum_{k\ge 0} \beta^k B^k A u_{n-k}$ and hence one can define a linear mapping
\[
h:\ (\ldots,u_{n-2},u_{n-1}) \;\longmapsto\; x_n,
\]
from the (infinite-dimensional) input history space into $\mathbb{R}^N$.
In practice, because the weights $\beta^k$ decay exponentially, only a finite window of past inputs contributes appreciably, so $h$ may be regarded as a linear map from a finite-dimensional Euclidean space $\mathbb{R}^{mL}$ (with $L$ determined by $\beta$) into $\mathbb{R}^N$ (see \cite{RC15, RC23} for details on this observation). Further, the image of $h$ is contained in the Krylov subspace given by
\[
\mathcal R \;=\; \mathrm{span}\big\{ B^k A : \ k\in  \left\{0, \ldots, N-1\right\} \big\},
\]
which is the Kalman controllability subspace associated with the pair $(B, A)$.
Thus the geometry of the reservoir states $\{x_n\}$ is entirely determined by how the linear map
\[
(u_{n-L},\dots,u_{n-1}) \;\longmapsto\; \sum_{k=0}^{L-1}\beta^k B^k A u_{n-k}
\]
embeds this input-history space into $\mathbb{R}^N$. Also, the map $h$ acts approximately as a random linear embedding of a high-dimensional input-history space into $\mathbb{R}^N$. In this regime, unless $N$ is very large compared to the effective dimension of the history space, the images $x_n$ tend to concentrate, leading to near-isotropic but weakly informative covariances. This is precisely the setting in which Johnson--Lindenstrauss type results \cite{JLlemma} indicate that a large ambient dimension $N$ is required to avoid distortion, collapse, or loss of separation of distinct inputs. Separately,  various stability, mixing, or ergodicity hypotheses have also been used recently to explain why density-based forecasting works for large $N$ \cite{RC35}.

\subsection{Conclusions}
Our construction yields low-dimensional, data-specific reservoirs by combining:
(i) an increment subspace $\calV$ derived from lifted feature increments,
(ii) cone-controlled leakage bounds ensuring stable geometry, and
(iii) an orthogonal alignment design of $B$.

\paragraph{Implications for other reservoir platforms.}
Although our construction is formulated in terms of a linear feedback matrix,
the underlying design principle is geometric and platform-independent.
In optical, photonic, or oscillator-based reservoirs,
one does not directly choose matrix entries,
but rather coupling strengths, gain parameters,
or network topologies.
These determine an effective Jacobian operator
that plays the role of $\beta B$.
Our results suggest that such systems could be engineered so that
instead of maximizing generic memory capacity, the geometry of the reservoir should be shaped to have a structured invariant subspace aligned with the input.

\medskip

\noindent {\bf Acknowledgements.} The authors acknowledge fruitful discussion with Lyudmila Grigoryeva. GM gratefully acknowledges generous support for research visits from the School of Physical and Mathematical Sciences (SPMS) of the Nanyang Technological University, Singapore.

\bibliographystyle{wmaainf}
\let\oldthebibliography\thebibliography
\renewcommand{\thebibliography}[1]{%
  \oldthebibliography{#1}%
  \setlength{\itemsep}{0pt}%
}
\bibliography{JPLibrary}

\newpage
\setcounter{page}{1}

\section{Supplement: Additional Numerical Results}

\subsection{Lorenz-63 system}
\paragraph{Histogram of ridge regression modes.}
We plot (see Fig.~\ref{fig:ridge_score}) the normalized empirical distribution of the ridge-mode contributions
\[
\frac{\langle q_i,c\rangle^2}{\lambda_i+\lambda},
\]
where $\lambda_i$ and $q_i$ are the eigenvalues and eigenvectors of the empirical reservoir-state covariance matrix $\Sigma$, $c$ is the state--target cross-covariance vector, and $\lambda>0$ is the ridge regularization parameter. These quantities are computed from the standardized teacher-forced $\tanh$ reservoir states used to train the ridge readout. The histograms are normalized to have unit area, so that the vertical axis represents a probability density.

A concentration of mass at larger ridge-mode values indicates that directions of large state variance are also strongly aligned with the prediction target. Hence, the histogram visualizes how the ridge prediction score is distributed across covariance eigen-directions for the two reservoir constructions and diagnoses spectral pollution. Clearly, the figure indicates that for the Lorenz system, the ridge-mode values do not have such an issue. 

\begin{figure}[h]
\label{fig:ridge_score}
\centering
   \includegraphics[width=8cm]{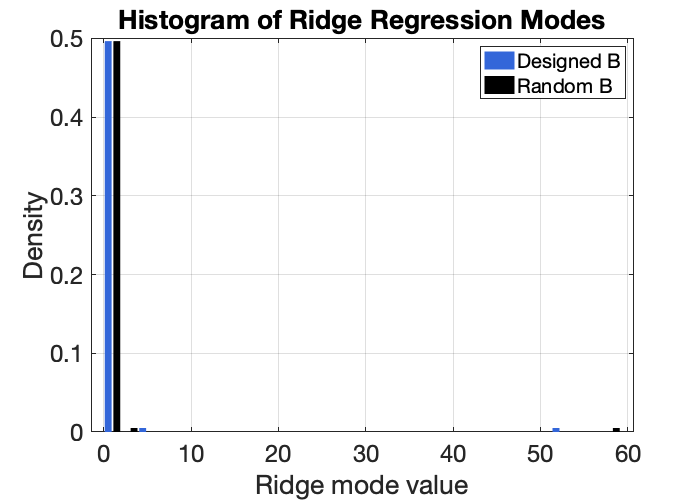}
 \caption{Normalized histogram of ridge regression mode contributions 
$\langle q_i,c\rangle^2/(\lambda_i+\lambda)$, where $\{\lambda_i,q_i\}$ are the eigenpairs of the empirical state covariance matrix $\Sigma$ and $c$ is the state--target cross-covariance vector. The blue bars correspond to the data-aligned (designed) reservoir matrix $B$, while the black bars correspond to a random orthogonal $B$.}
\end{figure}

\subsection{Mackey-Glass system}

The input time series is generated from the Mackey--Glass delay differential equation
\[
\dot{x}(t) = \beta \frac{x(t-\tau)}{1 + x(t-\tau)^n} - \alpha x(t),
\]
with standard parameter choices $\beta = 0.2$, $\alpha = 0.1$, $n = 10$, and delay $\tau = 17$, for which the system exhibits chaotic dynamics. The equation is numerically integrated using a fixed time step, and a scalar time series $\{x(t_k)\}$ is obtained by sampling the solution at discrete times $t_k = k\Delta t$. Following common practice, the initial transient is discarded and the remaining trajectory is treated as a stationary signal.

The resulting sequence is centered by subtracting its empirical mean and used as the scalar input $u_t$ to the reservoir. This dataset, introduced in (Cite Jaeger's science paper), is a standard benchmark for evaluating nonlinear time-series prediction methods due to its long-term chaotic behavior and sensitive dependence on past states.
 
All design settings for the experiment are identical to the Lorenz system. The results are presented in Fig.~\ref{fig:MG_DAR} while the ridge-mode values are plotted in Fig.~\ref{fig:ridge_score_mg}.

\begin{figure}[h]
\label{fig:MG_DAR}
\centering
   \includegraphics[width=16cm]{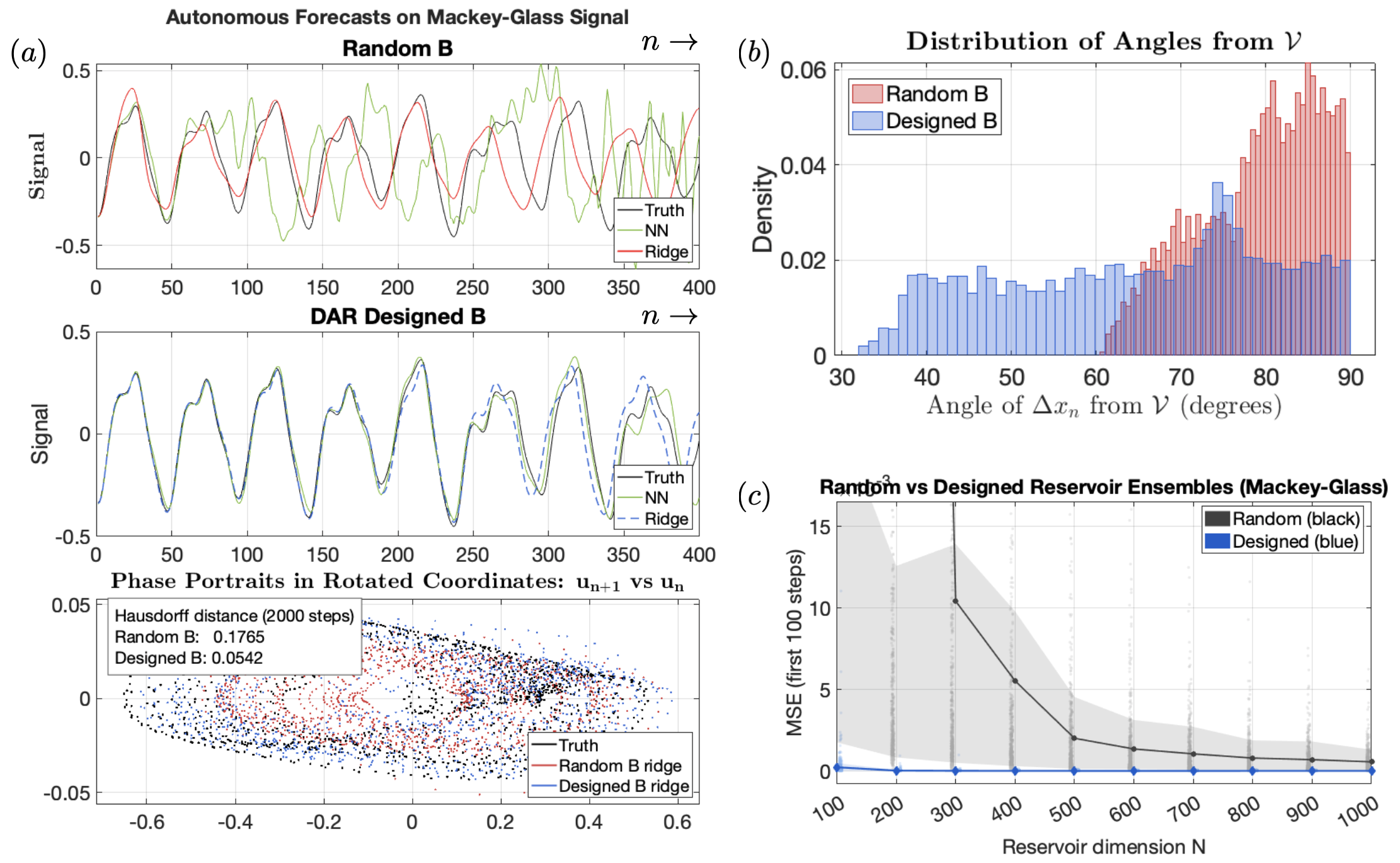}
   \caption{Performance illustration: For the trials in {\bf(a)} and {\bf(b)} reservoirs of size $N=100$ are used. For the DAR designed $B$, MSE in the first $100$ steps for the ridge regression readout: 0.005720 (for Random $B$) and 0.000110 (ridge regression training). In {\bf(c)}, shaded bands indicate the 10th--90th percentile range across trials (central $80\%$ variability), while solid lines show the mean error.}
\end{figure}

\begin{figure}[h]
\label{fig:ridge_score_mg}
\centering
   \includegraphics[width=8cm]{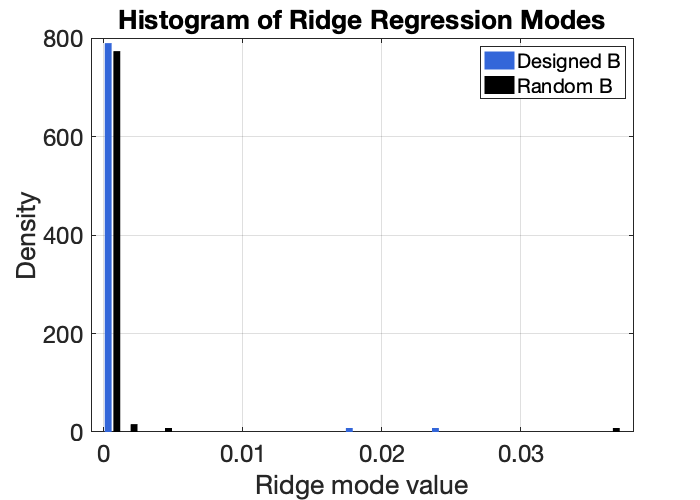}
 \caption{Mackey-Glass System: Normalized histogram of ridge regression mode contributions 
$\langle q_i,c\rangle^2/(\lambda_i+\lambda)$, where $\{\lambda_i,q_i\}$ are the eigenpairs of the empirical state covariance matrix $\Sigma$ and $c$ is the state--target cross-covariance vector. The blue bars correspond to the data-aligned (designed) reservoir matrix $B$, while the black bars correspond to a random orthogonal $B$.}
\end{figure}

\subsection{The full logistic map}

\paragraph{Logistic map data.}
The input time series is generated by the fully chaotic logistic map
\[
u_{t+1} = 4u_t(1 - u_t),
\]
with initial condition $u_0 \in (0,1)$. For this parameter value, the map exhibits chaotic dynamics and admits an invariant measure supported on $[0,1]$. After discarding an initial transient to remove dependence on the initial condition, the resulting trajectory $\{u_t\}$ is treated as a stationary scalar time series.

The sequence is centered by subtracting its empirical mean, yielding the input signal $u_t \leftarrow u_t - \bar{u}$ used to drive the reservoir. 

\paragraph{Effect of spectral pollution.}
For the logistic map input, both the randomly generated reservoir matrix $B$ and the DAR-designed matrix fail to produce accurate predictions  for the ridge regression training (for sanity results are not shown here) although the neural network training  performs very well. This degradation in performance can be attributed to spectral pollution in the reservoir-state covariance, as evidenced by the histogram of ridge-mode contributions as indicated in Fig.~\ref{fig:ridge_score_logistic}. In particular, the distributions of 
\[
\frac{\langle q_i,c\rangle^2}{\lambda_i+\lambda}
\]
are diffuse and lack concentration in a small number of dominant modes -- although the concentration mitigates the spectral pollution. This indicates that the covariance eigen-directions carrying significant variance are not well aligned with the prediction target, resulting in a spread of the ridge prediction score across many weakly informative modes. Consequently, neither construction of $B$ yields a low-dimensional representation of the predictive signal, leading to poor generalization in the autonomous forecasting task.

\begin{figure}[h]
\label{fig:ridge_score_logistic}
\centering
   \includegraphics[width=12cm]{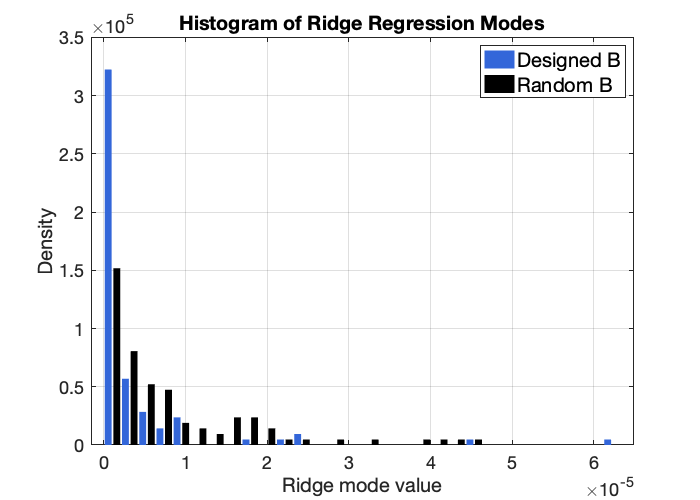}
 \caption{Full logistic map: Normalized histogram of ridge regression mode contributions 
$\langle q_i,c\rangle^2/(\lambda_i+\lambda)$, where $\{\lambda_i,q_i\}$ are the eigenpairs of the empirical state covariance matrix $\Sigma$ and $c$ is the state--target cross-covariance vector. The blue bars correspond to the data-aligned (designed) reservoir matrix $B$, while the black bars correspond to a random orthogonal $B$.}
\end{figure}

\end{document}